\newcommand{\sect}[1]{\section{#1}\setcounter{equation}{0}}
\font\mbn=msbm10 scaled \magstep1
\font\mbs=msbm7 scaled \magstep1
\font\mbss=msbm5 scaled \magstep1
\def\mbf{\fam\mbff}
\def\Re{{\mbf R}}
\def\Q{{\mbf Q}}
\def\Z{{\mbf Z}}
\def\N{{\mbf N}}
\newtheorem{Th}{Theorem}[section]
\newtheorem{D}[Th]{Definition}
\newtheorem{Proposition}[Th]{Proposition}
\newtheorem{R}[Th]{Remark}
\newtheorem{E}[Th]{Example}
\newtheorem*{Problem1}{Moments Finiteness Problem}
\newtheorem*{Problem2}{Restricted Moments Finiteness Problem}
\newtheorem*{Theorem}{Theorem}
\begin{document}

\title[]{Moments Finiteness Problem and Center Problem for Ordinary Differential Equations}

\author{Alexander Brudnyi} 
\address{Department of Mathematics and Statistics\newline
\hspace*{1em} University of Calgary\newline
\hspace*{1em} Calgary, Alberta\newline
\hspace*{1em} T2N 1N4}
\email{albru@math.ucalgary.ca}
\keywords{Moment, center problem, Lipschitz curve, polynomial approximation, unicursal curve, homology}
\subjclass[2000]{Primary 44A60. Secondary 37L10.}

\thanks{Research supported in part by NSERC}

\begin{abstract}
We study the moments finiteness problem  for the class of Lipschitz maps $F: [a,b]\rightarrow\mathbb R^n$ with images in a compact Lipschitz triangulable curve $\Gamma$. We
apply the obtained results to the center problem for ODEs describing in some cases (including equations with analytic coefficients) the set of universal centers of such equations by vanishing of finitely many moments from their coefficients.
\end{abstract}

\date{} 

\maketitle

\tableofcontents

\sect{Introduction} 
Let $f_1,\dots ,f_n$ be nonconstant real Lipschitz functions on an interval $[a,b]\Subset\Re$.
\begin{D}\label{def1}
A moment of degree $d$ is the expression of the form
\begin{equation}\label{eq1}
\int_a^b f_{1}(t)^{d_1}\cdots f_{n}(t)^{d_n}\cdot f'_{i}(t)\,dt,
\end{equation}
where $1\le i \le n$,  all $d_j\in\Z_+$ and  $\sum_{j=1}^n d_j=d$.
\end{D}
Moments play an important role in the study of the {\em center-focus problem} for a differential equation
\begin{equation}\label{eq2}
\frac{dv}{dt}=\sum_{j=1}^n a_j v^{j+1},\quad\text{where all}\quad a_j\in L^\infty([a,b]),\quad n\in\N\cup\{\infty\}.
\end{equation}
see \cite{AL}, \cite{BlRY}, \cite{BFY}, \cite{BRY}, \cite{BY}, \cite{B1}--\cite{B6}, \cite{C}, \cite{CGM1}--\cite{CGM3}, \cite{CL}, \cite{FPYZ},
\cite{GGL}, \cite{MP}, \cite{P1}, \cite{PRY}, \cite{P2}, \cite{Z} and references therein.  Note that the classical Poincar\'{e} center-focus problem for polynomial vector fields can be equivalently reformulated as the center problem for equations \eqref{eq1} with all $a_j$ being trigonometric polynomials of a special form.

Recall that equation \eqref{eq2} determines a {\em center} if for all sufficiently small initial values $v_0$ the corresponding solution satisfies
$v(b)=v(a):=v_0$. The simplest and, in a certain statistical sense, the most frequently occurring centers, so-called {\em universal centers}, are defined by vanishing of all iterated integrals from the coefficients $a_1,\dots, a_n$.  In this case all possible moments from $f_i(t):=\int_a^t a_i(s)\, ds$, $t\in [a,b]$, $1\le i\le n$, are zeros. 
Conversely, as follows from \cite[Th.~1.2]{B6} if all $a_i$ are real analytic in a neighbourhood of $[a,b]$, then vanishing of all moments from $f_1,\dots, f_n$ implies that the corresponding equation \eqref{eq2} determines a universal center. 

Let $\Gamma_F$ be the image of the Lipschitz map $F:=(f_1,\dots, f_n): [a,b]\rightarrow\Re^n$. By $\mathcal G_F$ we denote the set of Lipschitz maps $G=(g_1,\dots, g_n): [a,b]\rightarrow\Re^n$ such that $G([a,b])\subset\Gamma_F$. Let $\mathcal H\subset \mathcal G_F$ be an infinite subset.

\medskip

In this paper we study the following problem.
\begin{Problem1}
Is there a number $N\in\Z_+$ with the property that if for some $G\in \mathcal H$ all moments of degrees $\le N$ from $g_1,\dots, g_n$ vanish, then all other moments from these functions vanish as well?
\end{Problem1}

Our research is motivated by important results in \cite{CGM3} solving similar moments vanishing problems for maps $F=(f_1,f_2)\rightarrow\Re^2$, where $f_1',f_2'$ are either real univariate polynomials on $[a,b]$ or trigonometric polynomials on $[0,2\pi]$ of degrees $\le d$ (see Remark \ref{rem3.2} below). 
This, in particular, gives an explicit description of the set of universal centers of equation \eqref{eq2} with $n=2$ (the {\em Abel equation}) and $a_1,a_2$ real polynomials or trigonometric polynomials of degrees $\le d$ by means of finitely many algebraic equations from coefficients of $a_1$ and $a_2$.

In the present paper we describe a different from that of \cite{CGM3} approach to the finiteness problem alike the one of \cite{BY}. Specifically,
we will show that under some conditions on $\Gamma_F$ the Moments Finiteness Problem for $\mathcal G_F$ has a positive solution and give an effective bound of the optimal $N$ in question by means of some geometric characteristics of $\Gamma_F$. In Section 4 we apply these results to describe the set of universal centers of certain equations \eqref{eq1} (including equations with analytic coefficients) by vanishing of finitely many moments from their coefficients. \medskip

\noindent {\em Acknowledgment.} I thank Yu. Brudnyi for useful discussions.

%================
\sect{Moments Finiteness Problem for $n=2$} 

Suppose $F:=(f_1,f_2): [a,b]\rightarrow\Re^2$ is a nonconstant Lipschitz map. 
We will assume that the image $\Gamma_F$ of $F$ is a {\em Lipschitz triangulable curve}.
The latter means that $\Gamma_F$ can be presented as the union of finitely many arcs $\gamma_i$ such that for $i\ne j$ the intersection $\gamma_i\cap\gamma_j$ is either empty or consists of at most one of their endpoints, each $\gamma_i$ is parameterized by a Lipschitz map $h_i: [0,1]\rightarrow \gamma_i\subset\Re^2$ such that the inverse map $h_i^{-1}:\gamma_i\rightarrow [0,1]$ is locally Lipschitz apart from the endpoints.
The class of Lipschitz triangulable curves includes, in particular, piecewise $C^1$ curves and images of nonconstant analytic maps $[a,b]\rightarrow\Re^2$, see, e.g. \cite[Ex.\,5.1]{BY}.

According to the definition, $\Gamma_F$ is a finite one-dimensional $CW$-complex and
so it is homotopically equivalent to the wedge sum of circles. 

Our first result solves the Moments Finiteness Problem for $\Gamma_F$ homotopically trivial (for its proof see \cite{B1}).
\begin{Proposition}\label{prop2.1}
If $\Gamma_F$ does not contain subsets homeomorphic to a circle, then for $G=(g_1,g_2)\in \mathcal G_F$ vanishing of moments of degree zero from $g_1, g_2$ imply vanishing of all iterated integrals from these functions.
\end{Proposition}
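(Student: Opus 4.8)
The plan is to first unravel the hypothesis and then reduce everything to the contractibility of $\Gamma_F$. A moment of degree zero is nothing but $\int_a^b g_i'(t)\,dt=g_i(b)-g_i(a)$, so the vanishing of the two degree-zero moments from $g_1,g_2$ is exactly the statement $G(a)=G(b)$, i.e. $G=(g_1,g_2)$ is a \emph{loop} in $\Gamma_F$. To treat all iterated integrals simultaneously I would introduce noncommuting indeterminates $X_1,X_2$ and the Chen--Ree series of a Lipschitz path $\gamma$ with coordinate functions $g_1,g_2$,
\[
S(\gamma)=1+\sum_{k\ge 1}\ \sum_{i_1,\dots,i_k}\Bigl(\int_\gamma dg_{i_1}\cdots dg_{i_k}\Bigr)X_{i_1}\cdots X_{i_k},
\]
which is a group-like element of the completed free algebra (the iterated integrals satisfy the shuffle relations), is invariant under orientation-preserving reparametrization, is multiplicative under concatenation of paths, and satisfies $S(\bar\gamma)=S(\gamma)^{-1}$ for the reversed path. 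The assertion ``all iterated integrals from $g_1,g_2$ vanish'' is precisely the single identity $S(G)=1$, which is what makes the reductions below robust.

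Next I would use the topology of the target. By the discussion preceding the statement, $\Gamma_F$ is a finite one-dimensional $CW$-complex homotopy equivalent to a wedge of circles; as it contains no subset homeomorphic to a circle, that wedge is empty, so $\Gamma_F$ is a tree, hence contractible, and it admits Lipschitz deformation retractions onto any of its points. In particular $G$ is Lipschitz-homotopic, rel basepoint, to a constant loop. Moreover each $g_i$ factors as $p_i\circ G$, where $p_i\colon\Gamma_F\to\Re$ is the restriction of the $i$-th coordinate of $\Re^2$; thus the integrands $g_i'\,dt$ in $S(G)$ are pullbacks under $G$ of the exact one-forms $dp_i$ on the one-complex $\Gamma_F$, where no exactness obstruction can live. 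Granting that $S$ is then invariant under Lipschitz homotopies of the loop rel basepoint, we get $S(G)=S(\mathrm{const})=1$, which is the claim. (This also explains why only degree zero is needed here, in contrast to the case when $\Gamma_F$ carries circles: contractibility makes the loop condition alone sufficient.)

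The one genuinely nonroutine point is this homotopy invariance in a rough setting: $G$ is merely Lipschitz and $\Gamma_F$ is not a smooth manifold, so Chen's classical invariance theorem does not apply verbatim. I would establish it in one of two essentially equivalent ways. \emph{(a) Peeling leaves:} choose a leaf edge $e$ of the tree and the Lipschitz retraction $\rho$ collapsing $e$ to its attaching vertex; each maximal excursion of $G$ into the interior of $e$ is a Lipschitz loop confined to the arc $e$, so by the base case below it contributes a trivial factor to $S(G)$, whence $S(G)=S(\rho\circ G)$ and one induces on the number of edges. The base case — a Lipschitz loop confined to a single arc $\cong[0,1]$ — reduces, writing $g_i=u_i\circ\psi$ with $u_i$ Lipschitz on $[0,1]$ and $\psi$ a loop in the \emph{convex} set $[0,1]$, to the elementary fact that iterated integrals of exact one-forms over a loop vanish (via the explicit linear Lipschitz contraction of $\psi$ inside $[0,1]$, or directly from the shuffle relations). \emph{(b) Approximation:} approximate $G$, in a topology in which the iterated-integral functionals are continuous (e.g. uniform convergence together with uniformly bounded Lipschitz constants), by loops that are piecewise monotone along the edges of $\Gamma_F$; such an edge loop in a tree is carried to the constant loop by finitely many cancellations of out-and-back subpaths $\alpha\bar\alpha$, each leaving $S$ unchanged since $S(\alpha\bar\alpha)=S(\alpha)S(\alpha)^{-1}=1$, so $S(G)=\lim S(G_\varepsilon)=1$. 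I expect the main work to lie exactly here: in (a), taming loops with infinitely many wild excursions into a leaf edge, plus the minor care needed near the ends of an arc where $h_i^{-1}$ is only locally Lipschitz; in (b), proving continuity of the iterated-integral functionals on Lipschitz loops and approximating an arbitrary Lipschitz path in $\Gamma_F$ by a piecewise-monotone edge path with controlled norm.
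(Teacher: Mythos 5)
The paper does not actually prove Proposition~\ref{prop2.1}; it defers to \cite{B1}, and the same circle of ideas is invoked again in Section~4 via the tree composition condition (b$'$) and \cite[Cor.\,3.5]{BY}. Your reading of the hypothesis is right (the two degree-zero moments vanish iff $G(a)=G(b)$), and the topological reduction is right ($\Gamma_F$ without circles is a finite tree, hence contractible). Conceptually your route --- a null-homotopic Lipschitz loop in a one-complex has trivial Chen signature --- is precisely what the cited results deliver, so this is the same approach in spirit. There is, however, a shorter path to the conclusion using only what is already quoted in this paper: since $\Gamma_F$ is itself a tree, a closed path $G$ trivially satisfies condition (b$'$) with $T=\Gamma_F$, $\tilde F=\mathrm{id}$, $\tilde f=G$, and \cite[Cor.\,3.5]{BY} (equivalence of (b$'$) with vanishing of all iterated integrals for Lipschitz maps with Lipschitz triangulable image) finishes the proof in one line. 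Your proposal instead re-derives the content of that lemma.

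And it is exactly there that the proposal stops short of a proof. Both of your variants leave the load-bearing step open. In (a), the preimage under $G$ of the interior of a leaf edge can be a countable union of intervals, so removing the excursions is an infinite product of Chen factors; making sense of that limit already requires the continuity you set out to establish in (b), and the ``base case'' needs the reparametrization $\psi=h_i^{-1}\circ G$ which is only locally Lipschitz near the endpoints of the arc. In (b), continuity of all iterated-integral functionals under uniform convergence with uniformly bounded Lipschitz constants is true but is itself a nontrivial lemma (an inductive argument via the Fubini/Chen recursion $J_{r+1}(t)=\int_a^t g_{i_{r+1}}'J_r$, using boundedness of the derivatives and weak-$*$ convergence), and the approximation of an arbitrary Lipschitz loop in a tree by piecewise-monotone edge loops with controlled norm is asserted rather than constructed. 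These are precisely the technical contents of \cite{B1} and \cite{BY}. So the strategy is sound, but as written the ``one genuinely nonroutine point'' you identify is restated, not resolved; you should either carry out the continuity and approximation lemmas, or simply cite the tree composition equivalence and observe it applies trivially when the image is a tree.
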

Now, assume that the fundamental group $\pi_1(\Gamma_F)$ (of loops in $\Gamma_F$ with base point $F(a)\in\Re^2$) is isomorphic, for some $m\in\N$, to the free group with $m$ generators. We will also assume that moments of degree zero from $f_1, f_2$ vanish, or, equivalently, that $F(a)=F(b)$. 
Let $\ell_1,\dots , \ell_m : [0, 1]\rightarrow\Gamma_F$ be simple closed
Lipschitz curves in $\Re^2$ considered with counterclockwise orientation such that their images $[\ell_1],\dots, [\ell_m]$
in $H_1(\Gamma_F)$ are generators of this group. By the Jordan theorem images of curves $\ell_i$ in $\Re^2$ are
boundaries of some simply connected domains $D_i\subset\Re^2$, $1\le i\le m$. Let $A(D_i)$ stand for the area of $D_i$. 

The following result solves 
the Moments Finiteness Problem for a generic $\Gamma_F$ homotopic to the wedge sum of $m$ circles.
\begin{Proposition}\label{prop2.2}
Suppose $A(D_1),\dots, A(D_m)$ are linearly independent over the field $\Q$ of rational numbers. Then for every  Lipschitz map $G=(g_1,g_2): [a,b]\rightarrow\Re^2$, $G(a)=G(b)$, with image in $\Gamma_F$ vanishing of the moment $\int_a^b g_1(t)\cdot g_2'(t)\, dt$ implies vanishing of all other moments from $g_1, g_2$.
\end{Proposition}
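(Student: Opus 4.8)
The plan is to reduce the statement to showing that the loop $G$ is null-homologous in $\Gamma_F$, and then to detect this from the single hypothesis $\int_a^b g_1g_2'\,dt=0$ by expressing that moment as an integer combination of the areas $A(D_i)$.

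\medskip

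First I would reformulate moments as line integrals. Writing $x_1=x$, $x_2=y$ for the coordinates on $\Re^2$, one has $\int_a^b g_1(t)^{d_1}g_2(t)^{d_2}g_i'(t)\,dt=\int_G x^{d_1}y^{d_2}\,dx_i$, the integral of the polynomial $1$-form $x^{d_1}y^{d_2}\,dx_i$ over the Lipschitz loop $G$ (the equality $G(a)=G(b)$ makes $G$ a loop in $\Gamma_F$). Since $\Gamma_F$ is compact, polynomials are uniformly dense in $C(\Gamma_F)$, and $G$ is rectifiable, vanishing of all moments of $g_1,g_2$ is equivalent to $\int_G\omega=0$ for every continuous $1$-form $\omega$ on $\Gamma_F$. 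Because $\Gamma_F$ is a finite Lipschitz $1$-complex, a Lipschitz loop in it has well-defined integer net traversal numbers $n_e(G)$ along its oriented edges $e$, with $\int_G\omega=\sum_e n_e(G)\int_e\omega$ for every continuous $\omega$ and $[G]=\sum_e n_e(G)[e]$ in $H_1(\Gamma_F;\Z)$; this tameness of Lipschitz loops in Lipschitz triangulable curves is the machinery underlying Proposition~\ref{prop2.1}, see \cite{B1} and \cite{BY}. Hence all moments of $g_1,g_2$ vanish if and only if $[G]=0$ in $H_1(\Gamma_F;\Z)$, so it suffices to prove that $G$ is null-homologous in $\Gamma_F$.

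\medskip

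Next I would pin down $[G]$ via the distinguished moment. As $H_1(\Gamma_F;\Z)\cong\Z^m$ and the classes $[\ell_1],\dots,[\ell_m]$ generate it, they form a $\Z$-basis, so $[G]=\sum_{i=1}^m k_i[\ell_i]$ for uniquely determined $k_i\in\Z$. The integral $\int_\gamma x\,dy$ over a $1$-cycle $\gamma$ in $\Gamma_F$ depends only on $[\gamma]\in H_1(\Gamma_F;\Z)$: by the previous paragraph it equals $\sum_e n_e(\gamma)\int_e x\,dy$, and the $n_e(\gamma)$ are determined by $[\gamma]$. (Equivalently: if $\gamma$ is homologous to $\gamma'$ in $\Gamma_F$ via a Lipschitz $2$-chain $\sigma$, then $\int_\gamma x\,dy-\int_{\gamma'}x\,dy=\int_\sigma d(x\,dy)=\int_\sigma dx\wedge dy=0$, since $\sigma$ takes values in $\Gamma_F$, a set of zero $2$-dimensional Hausdorff measure.) Thus
\[
\int_a^b g_1(t)\,g_2'(t)\,dt=\int_G x\,dy=\sum_{i=1}^m k_i\int_{\ell_i}x\,dy,
\]
and Green's theorem applied to the Jordan domain $D_i$ bounded by the counterclockwise Jordan curve $\ell_i$ gives $\int_{\ell_i}x\,dy=A(D_i)$. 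Therefore $\int_a^b g_1g_2'\,dt=\sum_{i=1}^m k_iA(D_i)$.

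\medskip

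Finally, the hypothesis yields $\sum_{i=1}^m k_iA(D_i)=0$ with all $k_i\in\Z\subset\Q$; since $A(D_1),\dots,A(D_m)$ are linearly independent over $\Q$ this forces $k_1=\dots=k_m=0$, i.e.\ $[G]=0$ in $H_1(\Gamma_F;\Z)$, and by the first step all moments of $g_1,g_2$ vanish, as claimed. I expect the main obstacle to be the homological reduction in the first step: one must know that a Lipschitz loop in a Lipschitz triangulable curve carries a well-defined integral homology class through which all of its line integrals factor — the delicate point being that the loop may cross the edges infinitely often and that the inverse parametrizations $h_i^{-1}$ are only locally Lipschitz near cusps. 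This is the substantive input taken from the earlier work \cite{B1}, \cite{BY}; by contrast, the area computation and the rational-independence argument are elementary.
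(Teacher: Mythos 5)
Your proof is correct and follows essentially the same route as the paper: translate moments into line integrals of polynomial $1$-forms, use the homology decomposition $[G]=\sum k_i[\ell_i]$ to reduce to the $\ell_i$, apply Green's theorem to get $\int_a^b g_1 g_2'\,dt=\sum k_i A(D_i)$, invoke $\Q$-linear independence to kill the $k_i$, and then conclude that all moments vanish. One small overreach in your write-up: you claim vanishing of \emph{all} moments is \emph{equivalent} to $[G]=0$; the direction ``all moments vanish $\Rightarrow[G]=0$'' is not an immediate consequence of polynomial density plus rectifiability (it is in fact the nontrivial content behind Theorem~\ref{te2.1}), but you only ever use the easy direction ``$[G]=0\Rightarrow$ all moments vanish'', so the argument stands.
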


Further, consider the family $\{S_j\}_{1\le j\le m}$ of connected components of the open set 
\[
S=\left(\bigcup_{1\le j\le m}D_j\right)\setminus\left(\bigcup_{1\le j\le m}\ell_j([0,1])\right).
\]
Let $r_j$ be the sidelength of the maximal open square with sides parallel to the coordinate axes contained in $S_j$. We set
\[
r_{\Gamma_F}:=\min_{1\le j\le m}\{r_j\} .
\]
By $A_j$ we denote the area of $S_j$ and set
\[
A_{\Gamma_F}:=\max_{1\le j\le m}\{A_j\}.
\]
Finally, by $d_{\Gamma_F}$ we denote half of the sidelength of the minimal closed square with sides parallel to the coordinate axes containing $S$.

Let
\begin{equation}\label{const}
N_{\Gamma_F}:=\left\lfloor\frac{27\pi}{2}\frac{A_{\Gamma_F}d_{\Gamma_F}}{r_{\Gamma_F}^3}\right\rfloor+1.
\end{equation}
One can easily check that $\sqrt{m}\le\frac{d_{\Gamma_F}}{r_{\Gamma_F}}<N_{\Gamma_F}$.

The following result shows that the Moments Finiteness Problem has a positive solution for $F$ as above with the corresponding $N$ bounded by $2N_{\Gamma_F}$.
\begin{Th}\label{te2.1}
There exists a number $\widetilde N\in\N$ such that 
\[
\left\lfloor\sqrt{m}-\frac 12\right\rfloor\le \widetilde N\le N_{\Gamma_F}
\]
and for every  Lipschitz map $G=(g_1,g_2): [a,b]\rightarrow\Re^2$, $G(a)=G(b)$, with image in $\Gamma_F$
each moment from $g_1,g_2$ can be expressed as a finite linear combination with real coefficients  depending on $\Gamma_F$ only of $m$ moments  from the family
\[
\int_a^b g_{1}(t)^{d_1}\cdot g_{2}(t)^{d_2}\cdot g'_{2}(t)\,dt\quad  \text{with}\quad \max\{d_1-1,d_2\}\le \widetilde N.
\]
\end{Th}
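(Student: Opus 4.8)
\medskip
\noindent\emph{Proof idea.} The plan is to recognize each moment as a linear functional on $H_1(\Gamma_F;\Z)$, to evaluate that functional through Green's theorem over the faces $S_j$, and then to solve a finite-dimensional linear-algebra problem in which an explicit polynomial construction controls the bidegrees. For $d_1,d_2\in\Z_+$ set $\omega_{d_1,d_2}:=x^{d_1}y^{d_2}\,dy$; since $G(a)=G(b)$, the map $G=(g_1,g_2)$ is a Lipschitz loop in $\Gamma_F$ and $\int_a^b g_1^{d_1}g_2^{d_2}g_2'\,dt=\int_G\omega_{d_1,d_2}$. First I would check that this integral depends only on $[G]\in H_1(\Gamma_F;\Z)$: for any Lipschitz singular $2$-simplex $\sigma$ with image in $\Gamma_F$ the pullback $\sigma^*(d\omega_{d_1,d_2})$ vanishes a.e. (the image $\Gamma_F$ has planar Lebesgue measure zero, so the Jacobian of $\sigma$ vanishes a.e. by the area formula), whence $\int_{\partial\sigma}\omega_{d_1,d_2}=\int_\sigma d\omega_{d_1,d_2}=0$ and $G\mapsto\int_G\omega_{d_1,d_2}$ factors through $H_1(\Gamma_F;\Z)$; the Stokes/approximation formalism for Lipschitz chains here is the one already used in \cite{BY}.

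Next I would make this explicit. Writing $[G]=\sum_{i=1}^m k_i[\ell_i]$ with $k_i\in\Z$ (the $[\ell_i]$ generate $H_1(\Gamma_F;\Z)$), the previous step together with Green's theorem for the counterclockwise curves $\ell_i$ gives
\[
\int_a^b g_1(t)^{d_1}g_2(t)^{d_2}g_2'(t)\,dt=\sum_{i=1}^m k_i\int_{\ell_i}\omega_{d_1,d_2}=d_1\sum_{i=1}^m k_i\iint_{D_i}x^{d_1-1}y^{d_2}\,dx\,dy .
\]
A component $S_j$ of $S$ that meets $D_i$ must lie in $D_i$ (it is connected and disjoint from $\bigcup_l\ell_l([0,1])$), and $D_i\setminus\bigcup_l\ell_l([0,1])$ is the disjoint union of those $S_j$; since $\bigcup_l\ell_l([0,1])$ is Lebesgue-null, $\iint_{D_i}\phi=\sum_{j:\,S_j\subseteq D_i}\iint_{S_j}\phi$. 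Hence, with $\mu_j(a,b):=\iint_{S_j}x^ay^b\,dx\,dy$ and the integers $w_j:=\sum_{i:\,S_j\subseteq D_i}k_i$ (which depend on $G$ only),
\[
\int_a^b g_1(t)^{d_1}g_2(t)^{d_2}g_2'(t)\,dt=d_1\sum_{j=1}^m w_j\,\mu_j(d_1-1,d_2).
\]

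Thus every moment of $g_1,g_2$ is the value at $w=(w_1,\dots,w_m)\in\Re^m$ of the functional $\Phi_{d_1,d_2}\in(\Re^m)^*$, $\Phi_{d_1,d_2}(w)=d_1\sum_j w_j\mu_j(d_1-1,d_2)$, so it suffices to find monomials $x^{a_k}y^{b_k}$, $1\le k\le m$, with $\max\{a_k,b_k\}\le\widetilde N$ for which the vectors $\bigl(\mu_j(a_k,b_k)\bigr)_{j=1}^m$ are linearly independent in $\Re^m$: then the $\Phi_{a_k+1,b_k}$ form a basis of $(\Re^m)^*$, and expanding an arbitrary $\Phi_{d_1,d_2}$ in this basis represents every moment as a $\Gamma_F$-dependent linear combination of the $m$ moments $\int_a^b g_1^{a_k+1}g_2^{b_k}g_2'\,dt$, which lie in the required family since $\max\{(a_k+1)-1,b_k\}=\max\{a_k,b_k\}\le\widetilde N$. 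Such monomials exist for some $\widetilde N$ because $\chi_{S_1},\dots,\chi_{S_m}$ are linearly independent ($S_j$ disjoint and nonempty) and a nonzero finite combination of them is a nonzero bounded measurable function, hence not $L^2$-orthogonal to all polynomials; so the vectors $(\mu_j(a,b))_j$ span $\Re^m$. Taking $\widetilde N$ to be the least such value, the fact that there are only $(\widetilde N+1)^2$ vectors $(\mu_j(a,b))_j$ with $\max\{a,b\}\le\widetilde N$ forces $(\widetilde N+1)^2\ge m$, whence in particular $\widetilde N\ge\lfloor\sqrt m-\tfrac12\rfloor$.

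The remaining, and hardest, point is the upper bound $\widetilde N\le N_{\Gamma_F}$: one must exhibit $m$ independent $\mu$-vectors using only monomials of bidegree $\le N_{\Gamma_F}$. For each $j_0$ let $Q_{j_0}\subset S_{j_0}$ be the maximal axes-parallel open square, of side $r_{j_0}\ge r_{\Gamma_F}$, sitting inside the enclosing square of side $2d_{\Gamma_F}$. I would take $P_{j_0}(x,y)=p_{j_0}(x)\,q_{j_0}(y)$ with $p_{j_0},q_{j_0}$ one-variable polynomials of degree $\le N_{\Gamma_F}$, nonnegative, bounded below on the central halves of the respective sides of $Q_{j_0}$, and of small $L^1$-mass away from those short intervals — rescaled Fej\'er/Chebyshev-type kernels realize this with degree of order $d_{\Gamma_F}/r_{\Gamma_F}$ and $L^1$-mass of order $r_{\Gamma_F}$. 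After normalizing so that $\iint_{S_{j_0}}P_{j_0}=1$, one bounds $\bigl|\iint_{S_j}P_{j_0}\bigr|\le A_j\sup_{S\setminus Q_{j_0}}P_{j_0}\le A_{\Gamma_F}\sup_{S\setminus Q_{j_0}}P_{j_0}$ for $j\ne j_0$ and sums over $j$; carrying out the kernel estimates and using $m\le(d_{\Gamma_F}/r_{\Gamma_F})^2$ is exactly where the constant $\tfrac{27\pi}{2}$ and the exponents in $\tfrac{A_{\Gamma_F}d_{\Gamma_F}}{r_{\Gamma_F}^3}$ appear, and makes the matrix $\bigl(\iint_{S_j}P_{j_0}\bigr)_{j,j_0}$ strictly diagonally dominant, hence invertible. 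Expanding the $P_{j_0}$ in monomials then yields the desired $m$ independent $\mu$-vectors with all exponents $\le N_{\Gamma_F}$, giving $\widetilde N\le N_{\Gamma_F}$. I expect this last quantitative estimate — choosing the kernels and accounting for $A_{\Gamma_F},d_{\Gamma_F},r_{\Gamma_F}$ so that the localization is good enough by bidegree exactly $N_{\Gamma_F}$ — to be the crux of the argument, everything before it being soft homology and linear algebra.
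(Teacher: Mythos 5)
Your setup and the soft part of the argument match the paper's: the pass to $H_1(\Gamma_F)$, Green's theorem over the $D_i$, the decomposition into the $S_j$ with integer weights, the reduction to a spanning problem for the vectors $\bigl(\iint_{S_j} x^a y^b\,dx\,dy\bigr)_j$, and the cardinality count giving the lower bound $\widetilde N\ge\lfloor\sqrt m-\tfrac12\rfloor$ are all essentially identical to Sections 6.1--6.3 of the paper. Where you diverge is the quantitative upper bound $\widetilde N\le N_{\Gamma_F}$.

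The paper's derivation of the constant does not go through $m$ localized kernels and diagonal dominance. Instead it builds a \emph{single} auxiliary function $h=\sum_j\mathrm{sgn}(\tilde n_j)\,d_{S_j}$, where $d_{S_j}$ is the $\ell_\infty$-distance to $\Re^2\setminus S_j$; this is $\ell_\infty$-Lipschitz with constant one because the $S_j$ are pairwise disjoint and $d_{S_j}$ vanishes off $S_j$. Applying the Jackson-type Proposition~5.1 to $h$ over the bounding square and integrating against $\sum_j\tilde n_j\,\chi_{S_j}$ produces the inequality
\[
\tfrac{4}{27}\,r_{\Gamma_F}^{3}\sum_j|\tilde n_j|\;\le\;\frac{2\pi d_{\Gamma_F}}{k}\,A_{\Gamma_F}\sum_j|\tilde n_j|\;+\;\Bigl|\sum_j\tilde n_j\iint_{S_j}p_k\Bigr|,
\]
where the factor $\tfrac{4}{27}r_j^3\le\iint_{S_j}d_{S_j}$ comes from optimizing $\sup_{c\in(0,1]}(1-c)c^2r_j^3$ over homothetic subsquares of the maximal square in $S_j$. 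If the second term on the right were zero for some $k\ge N_{\Gamma_F}$, the inequality would force $\sum_j|\tilde n_j|=0$. This single-function contrapositive is what produces the precise $\tfrac{27\pi}{2}\,\tfrac{A_{\Gamma_F}d_{\Gamma_F}}{r_{\Gamma_F}^3}$; each of the three geometric constants appears once and cleanly, and the argument applies uniformly in the sign pattern $(\mathrm{sgn}\,\tilde n_j)_j$ without having to build a separate kernel per component.

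Your alternative --- tensor Fej\'er/Chebyshev kernels $P_{j_0}$, one per $S_{j_0}$, then diagonal dominance of $\bigl(\iint_{S_j}P_{j_0}\bigr)$ --- is a reasonable competing strategy and might even give sharper bounds in some regimes, but as written it has a genuine gap: you never carry out the kernel estimates, and the claim that they reproduce exactly the constant $\tfrac{27\pi}{2}$ and the exponents in $A_{\Gamma_F}d_{\Gamma_F}/r_{\Gamma_F}^3$ is asserted rather than shown. In particular, degree-$N$ polynomial kernels have only polynomial (not exponential) tails, so controlling $\sum_{j\ne j_0}A_j\sup_{S_j}P_{j_0}$ after normalizing $\iint_{S_{j_0}}P_{j_0}=1$ requires a concrete estimate that you flag as ``the crux'' but do not supply; it is precisely the part the paper handles, in a technically quite different way, via the distance-function sum and Proposition~5.1. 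Unless you complete that estimate, the upper bound $\widetilde N\le N_{\Gamma_F}$ is not established by your argument.
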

It will be proved that a natural number satisfying this property cannot be less than $\lfloor\sqrt{m}-\frac 12\rfloor$. The following example shows that in some cases the above bounds for $\widetilde N$ are almost optimal.
\begin{E}\label{ex2.2}
{\rm
Consider the square $\mathbb K^2:=[-1,1]^2\subset\Re^2$. For a fixed natural number $k$ we partite $\mathbb K^2$ into $k^2$ congruent subsquares with sides parallel to the coordinate axes with sidelength $\frac{2}{k}$. Let $\Gamma_F$ be the union of boundaries of all these subsquares. (Clearly,
$\Gamma_F$ can be obtained as the image of a piecewise linear map $F: [0,1]\rightarrow\Re^2$.) By the definition $\pi_1(\Gamma_F)$ is free group with $k^2$ generators and the family $\{S_j\}_{1\le j\le m}$ consists of all open subsquares in the partition. In particular, $m:=k^2$. Also, for all $j$ we have $r_j=\frac{2}{k}$ and $A_j=r_j^2$, and $d_{\Gamma_F}=1$. Hence, $N_{\Gamma_F}=\lfloor\frac{27\pi k}{4}\rfloor +1$. On the other hand,
$\lfloor\sqrt{m}-\frac 12\rfloor =k-\frac 12$. Thus, $\widetilde N\sim k$.}
\end{E}

\sect{Moments Finiteness Problem for $n\ge 3$} 
Our first result shows that in a sense Moments Finiteness Problems for $n\ge 3$ and $n\ge 2$ are equivalent. 

Suppose that $F:=(f_1,\dots, f_n): [a,b]\rightarrow\Re^n$, $n\ge 3$, is a nonconstant Lipschitz map such that $\Gamma_F:=F([a,b])$ is Lipschitz triangulable. Let $\langle,\cdot,\cdot\rangle$ be the inner product on $\Re^n$. For $G\in \mathcal G_F$ (see the Introduction) by $G_v: [a,b]\rightarrow\Re^2$, $v=(v_1,v_2)\in \Re^n\times\Re^n$, we denote the Lipschitz map $t\mapsto (\langle v_1,G(t)\rangle, \langle v_2,G(t)\rangle)$, $t\in [a,b]$. Next, for a subset $\mathcal H\subset\mathcal G_F$ we define $\mathcal H_v:=\{G_v:=(g_{1v}, g_{2v})\, :\, G\in\mathcal H\}$. Let us consider the following problem for $\mathcal H_v$.
\begin{Problem2}
Is there a number $N_v\in\Z_+$ with the property that if for some $G_v\in\mathcal H_v$ the moments
\[
\int_a^b g_{1v}'(t)\, dt\quad\text{and}\quad \int_a^b g_{1v}(t)^d\cdot g_{2v}'(t)\, dt\quad \text{for all}\quad d\le N_v
\]
vanish, then all other moments of this form from $g_{1v}, g_{2v}$ vanish as well?
\end{Problem2}

\begin{Th}\label{te3.1}
Assume that all maps $G\in\mathcal H\, (\subset\mathcal G_F)$ satisfy $G(a)=G(b)$. Then
for almost all $v\in \Re^n\times\Re^n$ (in the sense of Lebesgue measure) the Moments Finiteness Problem for $\mathcal H$ and the Restricted Moments Finiteness Problem for $\mathcal H_v$ are equivalent and the optimal constants $N$ and $N_v$ in these problems coincide.
\end{Th}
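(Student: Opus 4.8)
\emph{Proof sketch.} The plan is to turn the statement into a genericity assertion about a \emph{countable} family of polynomials in $v$.

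Write $g_{1v}:=\langle v_1,G\rangle$, $g_{2v}:=\langle v_2,G\rangle$ for $G=(g_1,\dots,g_n)\in\mathcal H$, and abbreviate the moments of $G$ by $\mu_{\mathbf e,j}(G):=\int_a^b g_1^{e_1}\cdots g_n^{e_n}\,g'_j\,dt$, where $\mathbf e=(e_1,\dots,e_n)\in\Z_+^n$ and $1\le j\le n$. Expanding the power and the derivative, one gets for every $d\in\Z_+$
\[
\int_a^b g_{1v}(t)^d\,g'_{2v}(t)\,dt=\sum_{|\mathbf e|=d}\ \sum_{j=1}^n\binom{d}{\mathbf e}\,v_1^{\mathbf e}(v_2)_j\,\mu_{\mathbf e,j}(G),
\]
with $\binom{d}{\mathbf e}$ the multinomial coefficient and $v_1^{\mathbf e}:=\prod_i (v_1)_i^{e_i}$, while $\int_a^b g'_{1v}\,dt=\langle v_1,G(b)-G(a)\rangle=0$. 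So each restricted moment of $G_v$ of degree $d$ is a fixed polynomial in $v$ whose coefficients are rational multiples of the moments of $G$ of degree $d$; in particular, if all moments of $G$ of degree $\le N$ vanish then so do all restricted moments of $G_v$ of degree $\le N$, for \emph{every} $v$. The real content is the converse, for almost all $v$.

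The structural input I would use is that moments are homological invariants. Since $G(a)=G(b)$, the map $G$ descends to a Lipschitz loop $\widetilde G$ in the finite one-dimensional complex $\Gamma_F$ and determines a class $[\widetilde G]\in H_1(\Gamma_F,\Z)$; moreover $\mu_{\mathbf e,j}(G)=\int_{\widetilde G}\omega_{\mathbf e,j}$ for the polynomial $1$-form $\omega_{\mathbf e,j}:=x_1^{e_1}\cdots x_n^{e_n}\,dx_j$ on $\Re^n$. Because $\Gamma_F$ is one-dimensional, $\omega_{\mathbf e,j}$ restricts to a closed form on it, so $\int_{\widetilde G}\omega_{\mathbf e,j}$ depends only on $[\widetilde G]$; hence there are classes $c_{\mathbf e,j}\in H^1(\Gamma_F,\Re)$ with $\mu_{\mathbf e,j}(G)=\langle c_{\mathbf e,j},[\widetilde G]\rangle$, and consequently $\int_a^b g_{1v}^d g'_{2v}\,dt=\langle\eta_d(v),[\widetilde G]\rangle$, where $\eta_d(v):=\sum_{|\mathbf e|=d,\,j}\binom{d}{\mathbf e}v_1^{\mathbf e}(v_2)_j\,c_{\mathbf e,j}$ depends polynomially on $v$. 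Since $H_1(\Gamma_F,\Z)\cong\Z^m$ is countable, the set $\mathcal L:=\{[\widetilde G]:G\in\mathcal H\}$ of homology classes realized by $\mathcal H$ is countable --- this is exactly what will tame the uncountability of $\mathcal H$.

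Call $\xi\in\mathcal L$ \emph{nontrivial} if $\langle c_{\mathbf e,j},\xi\rangle\ne0$ for some $(\mathbf e,j)$, and for such $\xi$ let $d_0(\xi)$ be the least $d$ for which this happens with $|\mathbf e|=d$. As the monomials $v_1^{\mathbf e}(v_2)_j$ are pairwise distinct, the polynomial $v\mapsto\langle\eta_{d_0(\xi)}(v),\xi\rangle$ has a nonzero coefficient, hence is not identically zero, so its zero set $E_\xi\subset\Re^n\times\Re^n$ is Lebesgue-null. Put $\mathrm{Bad}:=\bigcup_{\xi\in\mathcal L\ \mathrm{nontrivial}}E_\xi$, a countable union of null sets, hence null. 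Fix $v\notin\mathrm{Bad}$, $G\in\mathcal H$ with $\xi:=[\widetilde G]$, and $N\in\Z_+\cup\{\infty\}$. If not all moments of $G$ of degree $\le N$ vanish, then $\xi$ is nontrivial with $d_0(\xi)\le N$ and $\langle\eta_{d_0(\xi)}(v),\xi\rangle\ne0$, so $G_v$ has a nonzero restricted moment of degree $\le N$; contrapositively, together with the easy implication of the previous paragraph,
\[
\text{all moments of }G\text{ of degree }\le N\ \text{vanish}\iff\text{all restricted moments of }G_v\text{ of degree }\le N\ \text{vanish}.
\]
Therefore, for every $v\notin\mathrm{Bad}$, the Moments Finiteness Problem for $\mathcal H$ and the Restricted Moments Finiteness Problem for $\mathcal H_v$ have exactly the same set of admissible numbers $N$, which moreover is nonempty because the subspaces $\mathrm{span}\{c_{\mathbf e,j}:|\mathbf e|\le N\}$ of the finite-dimensional space $H^1(\Gamma_F,\Re)$ stabilize; in particular the two problems are equivalent and their optimal constants coincide.

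The step I expect to be the main obstacle is the homological input above: making rigorous, for a merely Lipschitz triangulable $\Gamma_F\subset\Re^n$, that the restriction of a polynomial $1$-form is a closed current on $\Gamma_F$ whose integral over a Lipschitz loop depends only on the loop's class in $H_1(\Gamma_F,\Z)$ (and that the associated classes $c_{\mathbf e,j}$ and their polynomial combinations $\eta_d(v)$ behave as stated). This rests on the analysis of Lipschitz triangulable curves from the $n=2$ theory (cf.\ \cite{B1}, \cite{BY}); granting it, what remains is the elementary ``countable union of null sets'' argument carried out above.
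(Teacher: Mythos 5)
Your proof is correct and rests on the same three pillars as the paper's: (i) the restricted moment $\int_a^b g_{1v}^d\, g_{2v}'\,dt$ is a polynomial $p_{G,d}(v)$ whose coefficients are multinomial multiples of the degree-$d$ moments of $G$; (ii) by homological invariance these moments are $\Z$-linear combinations of the fixed countable family $\int_{\ell_j}x_1^{d_1}\cdots x_n^{d_n}\,dx_i$ over a homology basis $\{\ell_j\}$ of $H_1(\Gamma_F)$; and (iii) a countable union of zero sets of nonzero polynomials is Lebesgue-null. The one structural difference is how the countable null union is indexed. The paper forms the countable field $\mathbb F$ generated over $\Q$ by all the numbers $\int_{\ell_j}x_1^{d_1}\cdots x_n^{d_n}\,dx_i$ and takes $V$ to be the conull set of $v$ whose $2n$ coordinates are algebraically independent over $\mathbb F$; then for $v\in V$, each $p_{G,d}$ vanishes at $v$ iff it vanishes identically, giving the degree-by-degree equivalence at once with no minimality bookkeeping. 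You instead take one polynomial $\langle\eta_{d_0(\xi)}(\cdot),\xi\rangle$ per nontrivial realized homology class $\xi$, using the minimal degree $d_0(\xi)$ to cover all $N$ simultaneously; this produces a smaller exceptional set tailored to $\mathcal H$, at the cost of the extra case analysis. The homological input you flagged as the potential obstacle --- that $\int_{\widetilde G}\omega_{\mathbf e,j}$ depends only on $[\widetilde G]\in H_1(\Gamma_F,\Z)$ for Lipschitz triangulable $\Gamma_F$ --- is exactly what the paper also invokes (writing $[G]=\sum_j n_j[\ell_j]$ and integrating closed $1$-forms over Lipschitz cycles, with justification deferred to \cite{B1}, \cite{BY}), so it is not an additional gap relative to the paper's own proof.
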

In the proof we describe explicitly the set of points $v\in \Re^n\times\Re^n$ for which Theorem \ref{te3.1} is valid.
\begin{R}\label{rem3.2}
{\rm (1) The theorem shows that if $\Gamma_F\Subset\Re^n$ is such that for almost all $v\in \Re^n\times\Re^n$ the set
$\Gamma_{F_v}\Subset\Re^2$ is Lipschitz triangulable, then one can estimate the optimal constant $N$ in the Moments Finiteness Problem for $\mathcal G_F$ by means of the constant $N_{\Gamma_{F_v}}$ for a generic $v$, see Theorem \ref{te2.1}. This condition is valid if, e.g., $\Gamma_F$ is a piecewise $C^1$ curve (a consequence of the Sard theorem) or the image of a nonconstant analytic map $[a,b]\rightarrow\Re^n$. 

(2) Theorem \ref{te3.1} allows to generalize the main results of \cite{CGM3} to the multidimensional case. Specifically, we have \medskip

\noindent {\bf Theorem.} {\em If $f_1',\dots, f_n'$ are either real univariate polynomials on $[a,b]$ or trigonometric polynomials on $[0,2\pi]$ of degrees $\le d$, then
vanishing of all moments from $f_1,\dots, f_n$ of degrees $2d-3$ in the first case and of degrees $4d-3$ in the second one implies vanishing of all other moments from these functions.}
\medskip

This result follows from \cite{CGM3} and the fact that for each $v\in\Re^n\times\Re^n$ the corresponding functions $f_{1v}', f_{2v}'$ are either polynomials or trigonometric polynomials of degrees $\le d$.
}
\end{R}

Let us present some other results providing positive solutions of the Moments Finiteness Problem. (As before we assume that
$\Gamma_F$ is Lipschitz triangulable.)

The following result is the extension of Proposition \ref{prop2.1}. It solves the problem for $\Gamma_F$ homotopically trivial (we refer \cite{B1} for the proof).
\begin{Proposition}\label{prop3.1}
If $\Gamma_F$ does not contain subsets homeomorphic to a circle, then for $G=(g_1,\dots, g_n)\in \mathcal G_F$ vanishing of moments of degree zero from $g_1,\dots, g_n$ imply vanishing of all iterated integrals from these functions.
\end{Proposition}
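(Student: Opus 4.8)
The first step is to note that the hypothesis forces $\Gamma_F$ to be a tree. Indeed $\Gamma_F=F([a,b])$ is connected, being a continuous image of an interval, and it is a finite one-dimensional $CW$-complex by Lipschitz triangulability; containing no subset homeomorphic to a circle, it is therefore a finite (metric) tree, in particular contractible. Next, vanishing of the moments of degree zero means exactly that $\int_a^b g_i'(t)\,dt=g_i(b)-g_i(a)=0$ for $1\le i\le n$, i.e. $G(a)=G(b)$; so $G$ is a Lipschitz loop in $\Gamma_F$, and the task is to show that the whole family of iterated integrals
\[
\int_{a\le t_1\le\dots\le t_k\le b}g_{i_1}'(t_1)\cdots g_{i_k}'(t_k)\,dt_1\cdots dt_k,\qquad k\ge 1,
\]
(equivalently, the Chen signature of the path $G$) vanishes.

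My plan is to reduce to combinatorial loops in the tree by approximation. Using the triangulation of $\Gamma_F$ and the bi-Lipschitz charts $h_i$ of its arcs, choose partitions $a=s_0<\dots<s_M=b$ that contain preimages of all triangulation vertices and become arbitrarily fine, so that on each $[s_{j-1},s_j]$ the map $G$ stays within a single arc; replacing $G$ on $[s_{j-1},s_j]$ by the sub-arc of $\Gamma_F$ joining $G(s_{j-1})$ to $G(s_j)$ produces (after a further subdivision of the triangulation making each $G(s_j)$ a vertex) a loop $G^{(M)}$ that is a closed edge-walk along the arcs of $\Gamma_F$. Since shortest-path interpolation does not increase variation and $G$ is Lipschitz, $G^{(M)}\to G$ uniformly with uniformly bounded variation, whence by the standard continuity of iterated integrals in that topology the iterated integrals of $G^{(M)}$ converge to those of $G$. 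It therefore suffices to treat a closed edge-walk $e_1e_2\cdots e_N$ in a tree. Such a walk must contain a backtrack $e_{j+1}=e_j^{-1}$ --- a reduced closed walk of positive length would trace a circle, impossible in a tree --- and since the signature is multiplicative under concatenation and inverts under reversal, $S(\alpha\,e_je_{j+1}\,\beta)=S(\alpha)S(e_j)S(e_{j+1})S(\beta)=S(\alpha)S(\beta)=S(\alpha\beta)$; deleting the spike changes no iterated integral, and iterating reduces the walk to the trivial loop, all of whose iterated integrals of length $\ge 1$ are zero. This proves the proposition, and the same argument gives Proposition \ref{prop2.1}.

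A more conceptual route is worth recording: $G$ is a loop in the contractible $\Gamma_F$, hence Lipschitz-homotopic rel basepoint to a constant (for instance via the geodesic contraction of the metric tree onto a vertex, which is Lipschitz since a tree is $\mathrm{CAT}(0)$), and one appeals to Chen's homotopy invariance of iterated integrals of closed $1$-forms; the decisive point is that the homotopy sweeps out only the one-dimensional set $\Gamma_F$, on which every $2$-form vanishes --- concretely $\Gamma_F$ is a finite union of Lipschitz images of $[0,1]$, hence has zero two-dimensional Hausdorff measure, so by the area formula the pullback under the homotopy of each $dx_i\wedge dx_j$ vanishes a.e., killing the ``curvature'' term in Chen's variation formula. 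The hard part in either approach is just the low regularity: $\Gamma_F$ is not a manifold and the $g_i$ are merely Lipschitz, so Chen's smooth machinery does not apply off the shelf. The piecewise-arc approximation is what makes the argument elementary, reducing everything to the signature algebra of closed walks in a tree; the only real work left is the routine but slightly fussy verification that an arbitrary Lipschitz loop in $\Gamma_F$ is approximated by such walks without change of its iterated integrals in the limit.
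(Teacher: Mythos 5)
Your proposal is correct, but it takes a different route from the paper. The paper does not prove Proposition \ref{prop3.1} in text; it cites \cite{B1}, where the equivalence between null-homotopy of a closed Lipschitz path in $\Gamma_F$ and vanishing of all of its iterated integrals is established for Lipschitz triangulable images (this is the (a)$\Leftrightarrow$(c) equivalence later invoked in Theorem \ref{teo4.2}). The proposition is then immediate: a connected finite $1$-complex without circles is a tree, hence contractible, so a closed loop is automatically null-homotopic. Your second, ``conceptual'' route is essentially this argument, and you correctly identify that the real content lies in the regularity: Chen's homotopy invariance is proved for smooth homotopies, and extending it to Lipschitz homotopies into a non-manifold, one-dimensional target is exactly what \cite{B1} supplies. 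Your first route --- approximating $G$ by geodesic edge-walks in the metric tree and then collapsing the Chen signature by iterated spike (backtrack) deletion, using multiplicativity of the signature under concatenation and its invertibility under reversal --- is a genuinely different, self-contained combinatorial proof, and the tree-walk reduction is correct. The one step that still needs justification or a citation is your appeal to ``standard continuity of iterated integrals'' under uniform convergence with uniformly bounded $1$-variation: this is true, but not elementary --- it follows from the interpolation inequality $\|\gamma_1-\gamma_2\|_{p\text{-var}}\le\|\gamma_1-\gamma_2\|_{\infty}^{1-1/p}\bigl(\|\gamma_1\|_{1\text{-var}}+\|\gamma_2\|_{1\text{-var}}\bigr)^{1/p}$ for $p>1$ combined with continuity of the signature map in $p$-variation for $1\le p<2$, neither of which is in the paper's toolbox, so you should either reproduce the estimate or cite the rough-paths literature for it.
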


Thus, as in the previous section, we may assume that the fundamental group $\pi_1(\Gamma_F)$ (of loops in $\Gamma_F$ with base point $F(a)\in\Re^n$) is isomorphic, for some $m\in\N$, to the free group with $m$ generators. Let $C_1,\dots, C_m\subset\Gamma_F$ be images of simple closed Lipschitz curves which generate group $H_1(\Gamma_F)$. Since $\Gamma_F$ is Lipschitz triangulable, there exist
Lipschitz curves $h_1,\dots , h_k : [0, 1]\rightarrow\cup_{1\le j\le m}C_j=:C$ 
forming the triangulation $\mathcal T$ of $C$. We set $\gamma_i:=h_i([0,1])$, $1\le i\le k$. \medskip

\noindent {\em Notation.} In what follows all cubes in $\Re^n$ are assumed to have sides parallel to the coordinate axes. The radius of such a cube is half of its sidelength.

\medskip

Assume that $\Gamma_F$ satisfies the following property: 
\begin{itemize}
\item[(*)]
There exist families of pairwise disjoint open cubes $K_i\subset\Re^n$ with centers $c_i\in\gamma_i$ and natural numbers $s_i\in [1,n]$,
$1\le i\le k$, such that for each $i$

\noindent (a) $K_i\cap\gamma_i$ is connected  and $K_i\cap\gamma_j=\emptyset$ for all $j\ne i$;

\noindent (b) restriction to $K_i\cap\gamma_i$ of the projection $\pi_{s_i}:\Re^n\rightarrow\Re^n$,
$\pi_{s_i}\bigl((x_1,\dots, x_n)\bigr):=(0,\dots, 0,x_{s_i},0,\dots, 0)$, is injective.
\end{itemize}

(This condition is valid, e.g., if $\Gamma_F$ is piecewise $C^1$, or the image of a nonconstant analytic map $[a,b]\rightarrow\Re^n$.)

Let $r_i$ be the radius of $K_i$ and $l_i$ be linear measure of the set $\pi_{s_i}\bigl(\frac 12 K_i\cap\gamma_i\bigr)$, where $\frac 12 K_i:=\frac 12 (K_i-c_i)+c_i$ (the cube $\frac 12$-homothetic to $K_i$ with respect to $c_i$). We set
\[
r_{\mathcal T}:=\min_{1\le i\le k}\{r_i\}\quad\text{and}\quad l_{\mathcal T}:=\min_{1\le i\le k}\{l_{s_i}\}.
\]
(According to condition (*), $l_{\mathcal T}>0$.)

Also, we denote 
\[
L_i:=\int_0^1 \|h_i'(t)\|_{\ell_1^n}\, dt,\qquad  L_{\mathcal T}=\max_{1\le i\le k} L_i;
\]
(Here $\|(x_1,\dots, x_n)\|_{\ell_1^n}:=\sum_{j=1}^n |x_j|$.)

Finally, by $d_{\Gamma_F}$ we denote the radius of the minimal closed cube containing $C$.

Let
\begin{equation}\label{const1}
\bar{N}_{\mathcal T}:=\left\lfloor\frac{2\pi n L_{\mathcal T} d_{\Gamma_F}}{r_{\mathcal T} l_{\mathcal T}}\right\rfloor+1.
\end{equation}

The following result shows that the Moments Finiteness Problem has a positive solution for $F$ as above with the corresponding $N$ bounded by $n \bar{N}_{\mathcal T}$.
\begin{Th}\label{te3.4}
There exists a number $\widetilde N\in\N$ such that 
\[
\max\left\{\left\lfloor\sqrt[n]{\frac mn}-1\right\rfloor, 1\right\}\le \widetilde N\le N_{\mathcal T}
\]
and for every  Lipschitz map $G=(g_1,\dots,g_n): [a,b]\rightarrow\Re^n$, $G(a)=G(b)$, with image in $\Gamma_F$
each moment from $g_1,\dots,g_n$ can be expressed as a finite linear combination with real coefficients depending on $\Gamma_F$ only of $m$ moments from the family
\[
\int_a^b g_1(t)^{d_1}\cdots g_n(t)^{d_n}\cdot g_i'(t)\, dt\quad  \text{with}\quad \max_{1\le j\le n}\{d_j\}\le \bar{N}_{\mathcal T},\quad 1\le i\le n.
\]
\end{Th}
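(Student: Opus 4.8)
\medskip

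\noindent\textbf{Proof plan.} The idea is to run the homological mechanism behind Theorem~\ref{te2.1} directly in $\Re^n$, replacing the planar Green's‑theorem/winding‑number bookkeeping by a ``probing'' of each arc of the triangulation through the private cube supplied by condition~(*). First I would reinterpret moments as periods. For $G=(g_1,\dots,g_n)$ with $G(a)=G(b)$ each moment $\int_a^b g_1^{d_1}\cdots g_n^{d_n}g_i'\,dt$ equals $\int_G\omega$ for the polynomial $1$‑form $\omega=x_1^{d_1}\cdots x_n^{d_n}\,dx_i$, $G$ now being a Lipschitz $1$‑cycle with $[G]\in H_1(\Gamma_F;\Z)\cong\Z^m$. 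Since $\Gamma_F$ is $1$‑rectifiable, the pullback to $\Gamma_F$ of any $2$‑form vanishes a.e., so homologous Lipschitz loops in $\Gamma_F$ cobound a Lipschitz $2$‑chain supported in $\Gamma_F$ and Stokes forces equal integrals of $\omega$; hence $\omega\mapsto\int_G\omega$ depends only on $[G]$. Writing $[G]=\sum_j k_j(G)[C_j]$ with $k_j(G)\in\Z$ we obtain
\[
\int_a^b g_1^{d_1}\cdots g_n^{d_n}g_i'\,dt=\sum_{j=1}^m k_j(G)\int_{C_j}x_1^{d_1}\cdots x_n^{d_n}\,dx_i ,
\]
with weights $k_j(G)$ independent of the form. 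Thus the theorem reduces to the assertion that, with $\bar N=\bar N_{\mathcal T}$, the $m\times|\mathcal F_{\bar N}|$ period matrix $\bigl[\int_{C_j}\omega\bigr]_{j,\omega}$ has rank $m$, where $\mathcal F_{\bar N}:=\{x_1^{d_1}\cdots x_n^{d_n}\,dx_i:\max_j d_j\le\bar N,\ 1\le i\le n\}$, $|\mathcal F_{\bar N}|=n(\bar N+1)^n$. Indeed, once this holds, choosing $\widetilde N$ minimal with the rank‑$m$ property (so $\widetilde N\le\bar N_{\mathcal T}$), selecting $m$ columns $\omega_1,\dots,\omega_m$ spanning the column space and writing any other column as a fixed combination $\sum_l\alpha_l(\omega)(\cdot)$ with $\alpha_l(\omega)$ depending only on the $C_j$ (i.e.\ on $\Gamma_F$), the displayed identity gives $\int_G\omega=\sum_l\alpha_l(\omega)\int_G\omega_l$ for every $G$, which is the claimed finite linear expression in $m$ moments from $\mathcal F_{\bar N_{\mathcal T}}$. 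The lower bound $\widetilde N\ge\max\{\lfloor\sqrt[n]{m/n}\rfloor-1,1\}$ follows from $|\mathcal F_{\widetilde N}|=n(\widetilde N+1)^n\ge m$ together with the fact that the degree‑$0$ forms $dx_i$ have all periods zero, so $\widetilde N=0$ fails once $m\ge1$.

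The heart of the matter is the rank estimate: if $\lambda_1,\dots,\lambda_m\in\Re$ satisfy $\sum_j\lambda_j\int_{C_j}\omega=0$ for all $\omega\in\mathcal F_{\bar N_{\mathcal T}}$, then all $\lambda_j=0$. Passing to the triangulation, rewrite the real $1$‑cycle $T:=\sum_j\lambda_j[C_j]$ in the arc basis as $T=\sum_{i=1}^k\mu_i[\gamma_i]$ (the $\mu_i$ obey Kirchhoff balance at the vertices of $\mathcal T$ since $\partial T=0$), and suppose $T\neq0$, say $\mu_{i_0}\neq0$. By condition~(*) the open cube $K_{i_0}$ meets the triangulation only along $\gamma_{i_0}$, and $\pi_{s_{i_0}}$ is injective on $\tfrac12K_{i_0}\cap\gamma_{i_0}$, whose image has linear measure $l_{i_0}\ge l_{\mathcal T}>0$. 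I would test $T$ against $\omega=\rho\cdot dx_{s_{i_0}}$, where $\rho$ is a polynomial — built as a product of one‑variable shifted Chebyshev‑type polynomials — with $\rho\ge\tfrac12$ on $\tfrac12K_{i_0}$, $|\rho|\le1$ on the cube of radius $d_{\Gamma_F}$ containing $C$, and $|\rho|$ as small as desired on the part of that cube outside $K_{i_0}$, at the cost of degree of order $d_{\Gamma_F}/r_{i_0}\le d_{\Gamma_F}/r_{\mathcal T}$. Changing variables on $\gamma_{i_0}\cap\tfrac12K_{i_0}$ via the injective projection turns $\int\rho\,dx_{s_{i_0}}$ into a genuine one‑dimensional integral of a positive integrand over a set of measure $\ge l_{\mathcal T}$, producing a main term $\gtrsim|\mu_{i_0}|\,l_{\mathcal T}$; the remaining contributions are bounded by $\max_i|\mu_i|\cdot\max|\rho|\cdot\sum_i\int_0^1\|h_i'\|_{\ell_1^n}\,dt\le\max_i|\mu_i|\cdot\max|\rho|\cdot kL_{\mathcal T}$, with $\max|\rho|$ on the relevant region made small. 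Taking the degree of $\rho$ appropriately gives $\langle T,\omega\rangle\neq0$ for a form of degree $\le\bar N_{\mathcal T}$ — contradiction — provided $\bar N_{\mathcal T}\ge\frac{2\pi nL_{\mathcal T}d_{\Gamma_F}}{r_{\mathcal T}l_{\mathcal T}}$, which is exactly how the constant \eqref{const1} arises (the $n$ from the $\ell_1^n$‑length, the numerical factor from the Chebyshev estimates). Hence $T=0$, so $\lambda_j=0$ for all $j$ because $[C_1],\dots,[C_m]$ is a basis of $H_1(\Gamma_F;\Z)$, completing the rank estimate.

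The main obstacle is precisely this last estimate and its sharp constant: one must (i) construct the localizing polynomial $\rho$ with the stated plateau/decay profile and the sharp degree count $\asymp d_{\Gamma_F}/r_{i_0}$; (ii) ensure the ``main term'' over $\gamma_{i_0}\cap\tfrac12K_{i_0}$ does not cancel — this is where injectivity of $\pi_{s_{i_0}}$ and the positive lower bound $l_{\mathcal T}$ on the projected measure enter, and if $\tfrac12K_{i_0}\cap\gamma_{i_0}$ is disconnected one passes to a concentric sub‑cube or inserts a one‑variable weight in $x_{s_{i_0}}$ to kill orientation cancellations; and (iii) control the sum over the other $k-1$ arcs uniformly without inflating the degree of $\rho$, which I expect to require organizing the estimate inductively, arc by arc (subtracting off already‑identified contributions), in the spirit of \cite{BY}, so as to reach the claimed bound \eqref{const1} rather than one carrying an extra dependence on the number of cells of $\mathcal T$.
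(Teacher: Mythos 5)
Your homological reduction --- moments as periods of polynomial $1$-forms over $[G]\in H_1(\Gamma_F)$, the reformulation as a rank-$m$ condition for the period matrix built from the generators $[C_j]$, and the cardinality count giving the lower bound on $\widetilde N$ --- is exactly the paper's, and this part is fine. The divergence, and the gap, is in the rank estimate.

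You localize to a single arc $\gamma_{i_0}$ by a ``plateau'' polynomial $\rho$ that is $\ge\frac12$ on $\tfrac12K_{i_0}$ and small on the big cube outside $K_{i_0}$. But the noise from the remaining arcs is of size $\max|\rho|_{\text{outside}}\cdot\sum_{i\ne i_0}|\mu_i|\cdot L_{\mathcal T}$, and the only control you have on $\sum_{i\ne i_0}|\mu_i|$ is $\le k\,|\mu_{i_0}|$ after choosing $i_0$ to maximize $|\mu_{i_0}|$. Pushing $\max|\rho|$ below $\sim l_{\mathcal T}/(kL_{\mathcal T})$ on the big cube minus $K_{i_0}$ costs, by Remez/Chebyshev-type estimates, degree $\gtrsim (d_{\Gamma_F}/r_{\mathcal T})\log\!\left(kL_{\mathcal T}/l_{\mathcal T}\right)$ --- an extra $\log k$ that is not present in $\bar N_{\mathcal T}$ of \eqref{const1}. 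You notice this in point (iii), but the ``inductive, subtract-off'' scheme you invoke is not worked out, and it is unclear how to make it algebraically meaningful: $\rho$ is one polynomial, and there is no natural way to ``subtract already-identified contributions'' from it.

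The paper eliminates this $k$-dependence by a different choice of test form. For each coordinate direction $r$ it forms the \emph{sum} of signed $\ell_\infty^n$-distance functions
$\tilde d_r:=\sum_{i:\,s_i=r}\mathrm{sgn}(n_i)\,\delta_{s_i}\,d_{K_i}$
over all cubes $K_i$ with projection index $r$. Disjointness of the $K_i$ makes $\tilde d_r$ $1$-Lipschitz along every coordinate line, so Proposition~\ref{approx} (Achiezer's Jackson theorem composed with the Chebyshev change of variables) gives a polynomial $p_{r,k}$ of coordinate-degree $k$ with uniform error $\le\frac{n\pi d_{\Gamma_F}}{k}$. Pairing the cycle against $\sum_r\tilde d_r\,dx_r$: each arc $\gamma_j$ meets only its own $K_j$, and the sign $\delta_{s_j}$ is matched to the monotonicity of $h_{j,s_j}$, so \emph{every} arc contributes nonnegatively and the total is $\ge\frac{r_{\mathcal T}l_{\mathcal T}}{2}\sum_j|n_j|$; while replacing $\tilde d_r$ by $p_{r,k}$ costs at most $\frac{n\pi L_{\mathcal T}d_{\Gamma_F}}{k}\sum_j|n_j|$. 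Crucially the main term and the error carry the \emph{same} weight $\sum_j|n_j|$, so it cancels, yielding $k\le\frac{2\pi nL_{\mathcal T}d_{\Gamma_F}}{r_{\mathcal T}l_{\mathcal T}}<\bar N_{\mathcal T}$ with no dependence on the number of arcs. This ``sum-of-distances'' device --- one polynomial approximation handling all arcs simultaneously with matching weights on both sides --- is the missing ingredient in your proposal.
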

The next example generalizes Example \ref{ex2.2} and shows that in some cases the bounds for $\widetilde N$ given in the previous theorem are almost optimal (up to constants depending on $n$).
\begin{E}\label{ex3.5}
{\rm Consider the cube $\mathbb K^n:=[-1,1]^n\subset\Re^n$. For a fixed natural $k$ we partite $\mathbb K^n$ into $k^n$ congruent subcubes of radius $\frac 1k$. Let $\Gamma_F$ be the union of edges of all these subcubes. (Clearly, $\Gamma_F$ can be obtained as the image of a piecewise linear map $F: [0,1]\rightarrow\Re^n$.) The rank $m$ of $H_1(\Gamma_F)$ is the number of square faces of the $CW$ complex consisting of the union of all subcubes in the partition. One has the following estimates for $m$:
\[
n\cdot (n-1)\cdot 2^{n-3}\cdot k^n\cdot\frac{1}{2^{n-2}}<m<n\cdot (n-1)\cdot 2^{n-3}\cdot (k+2)^n\cdot\frac{1}{2^{n-2}}.
\]
Hence,
\begin{equation}\label{eq3.2}
\left\lfloor\sqrt[n]{\frac{n-1}{2}}\cdot k -1\right\rfloor < \left\lfloor\sqrt[n]{\frac mn}-1\right\rfloor<\left\lfloor\sqrt[n]{\frac{n-1}{2}}\cdot (k+2)-1\right\rfloor.
\end{equation}
Next, we consider the triangulation $\mathcal T$ of $\Gamma_F$ consisting of parameterized edges of the above partition. We easily obtain that
$r_{\mathcal T}=\frac{1}{2k}$, $l_{\mathcal T}=\frac{1}{4k}$, $L_{\mathcal T}=\frac 2k $ and $d_{\Gamma_F}=1$. Thus,
\begin{equation}\label{eq3.3}
\bar{N}_{\mathcal T}=\left\lfloor\frac{2\pi n\cdot\frac{2}{k}}{\frac{1}{8k^2}}\right\rfloor +1=\lfloor 32\pi n k\rfloor +1.
\end{equation}
From \eqref{eq3.2} and \eqref{eq3.3} we deduce that $\widetilde N= c(n)k$, where $0<c_1\le c(n)\le c_2 n$ for some absolute constants $c_1, c_2$.
}
\end{E}

\sect{Applications to Center Problem for ODEs}
\subsection{Traversable and Unicursal Curves}
An {\em Eulerian trail} in an undirected connected graph is a path that uses each edge exactly once. If such a path exists, the graph is called {\em traversable}. If, in addition, it is closed, the graph is called {\em unicursal}.

It is well known that a connected undirected graph is traversable if and only if at most two of its vertices have odd degree and is unicursal if 
every its vertex has even degree.

A connected one-dimensional $CW$-complex $X$ is said to be traversable or unicursal if it is homeomorphic to a traversable or unicursal graph. In this case the notion of an Eulerian trail $\gamma : I\rightarrow X\, (:=\gamma(I))$, where $I$ is a closed arc of the unit circle $\mathbb S$, is well defined. ($I:=\mathbb S$ iff $X$ is unicursal). We say that a path $\tilde\gamma: [a,b]\rightarrow X$ covers $\gamma$ if there exists a path $\gamma': [a,b]\rightarrow I$ such that $\tilde\gamma=\gamma\circ\gamma'$. 

The next result is a straightforward corollary of \cite[Th.\,1.5]{B6}.
\begin{Th}\label{ex4.1}
Let $F: [a,b]\rightarrow\Re^n$ be a nonconstant analytic map defined in a neighbourhood of $[a,b]$. Then the Lipschitz triangulable curve $\Gamma_F:=F([a,b])$ is traversable. Moreover, $F:[a,b]\rightarrow\Gamma_F$ covers an Eulerian trail in $\Gamma_F$.
\end{Th}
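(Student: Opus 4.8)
The plan is to deduce Theorem \ref{ex4.1} from \cite[Th.\,1.5]{B6}; in fact the statement is little more than a reformulation of that result in the vocabulary of traversable and unicursal complexes. Since $F$ is nonconstant and analytic in a neighbourhood of $[a,b]$, the image $\Gamma_F$ is Lipschitz triangulable (see \cite[Ex.\,5.1]{BY}), hence a finite one-dimensional $CW$-complex, homeomorphic to a graph; it is connected because $[a,b]$ is. So it is meaningful to ask whether $\Gamma_F$ is traversable and to speak of Eulerian trails $\gamma:I\rightarrow\Gamma_F$ in it. I observe at the outset that it suffices to produce a single Eulerian trail $\gamma:I\rightarrow\Gamma_F$ that is covered by $F$: by the very definition, the existence of such a trail makes $\Gamma_F$ traversable (and unicursal when $\gamma$ is closed, i.e. when $I=\mathbb S$), while the relation that $F$ covers $\gamma$ is precisely the ``moreover'' part. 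Thus everything reduces to exhibiting a factorization
\[
F=\gamma\circ\gamma',\qquad \gamma:I\rightarrow\Gamma_F\ \text{an Eulerian trail},\qquad \gamma':[a,b]\rightarrow I\ \text{a path}.
\]

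To produce this factorization I would invoke \cite[Th.\,1.5]{B6}. The relevant point is that analyticity of $F$ on a neighbourhood of the compact interval $[a,b]$ makes all the pertinent analytic sets finite: $F$ has only finitely many critical points, and the preimage under $F$ of the (finite) $0$-skeleton of $\Gamma_F$ is a finite subset of $[a,b]$, being a finite union of zero sets of analytic functions, none identically zero since $F$ is nonconstant. Subdividing $[a,b]$ at these finitely many parameter values, and refining the $CW$ structure of $\Gamma_F$ if necessary, presents $F$ as a finite walk that runs over every arc of $\Gamma_F$. The content of \cite[Th.\,1.5]{B6} is that this walk is \emph{coherent}, and in particular that it factors through an Eulerian trail $\gamma$ of $\Gamma_F$ by a continuous path $\gamma':[a,b]\rightarrow I$ into the parameter arc $I$ of $\gamma$. (If \cite[Th.\,1.5]{B6} is phrased in terms of an intermediate ``model'' curve rather than literally as such a factorization, the passage is immediate: compose with the identification of the model with $\Gamma_F$ and read off $\gamma$ and $\gamma'$.) This gives the displayed factorization, and with it the theorem.

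It remains to dispose of the elementary points. Since $F$ is nonconstant, $\Gamma_F$ is a nondegenerate curve, so the graph in question is nonempty; and, as noted, it is connected. The Eulerian trail $\gamma$ furnished by \cite[Th.\,1.5]{B6} may or may not be closed --- that is, $I=\mathbb S$ or $I$ is a proper closed arc of $\mathbb S$ --- and in the closed case $\Gamma_F$ is moreover unicursal; but the theorem only asserts traversability, which holds in either case. Note also that ``covering'' is a deliberately weak relation: $F$ is permitted to run over arcs of $\Gamma_F$ with multiplicity --- for instance a circle traced twice --- the path $\gamma'$ merely recording the multiplicities, so that no normalization of $F$ beyond analyticity is needed.

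The crux, and the reason analyticity cannot be dispensed with, lies entirely inside \cite[Th.\,1.5]{B6}: the coherence of the induced walk. Combinatorially, a finite walk that merely runs over every edge of a graph need not factor through \emph{any} Eulerian trail --- on the figure-eight with loops $A$ and $B$ at its single vertex, the walk $A,A,B,B$ runs over both edges but cannot be written as $\gamma\circ\gamma'$ for the Eulerian trail $\gamma=A,B$, since at the vertex $\gamma'$ would be forced to jump. Analyticity rules such behaviour out: at a turning point, or at a passage through a vertex, the forward continuation of $F$ is pinned down by its analytic germ, so $F$ cannot switch freely among the local branches at a vertex. Packaging this rigidity into the existence of the Eulerian trail $\gamma$ together with a \emph{continuous} lift $\gamma'$ is the technical heart; it is carried out in \cite[Th.\,1.5]{B6}, and here one only invokes it.
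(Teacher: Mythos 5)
Your proposal is correct and takes essentially the same route as the paper: the paper itself offers only the one-sentence remark that the theorem is a straightforward corollary of \cite[Th.\,1.5]{B6}, and you deduce it from that same citation. Your added observations---that it suffices to exhibit a factorization $F=\gamma\circ\gamma'$ through an Eulerian trail, and the figure-eight example showing why analyticity is indispensable---are useful elaborations of what ``straightforward'' means here, but the substance is the same.
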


In our applications we use the following result.

\begin{Th}\label{teo4.2}
Let $F=(f_1,\dots, f_n): [a,b]\rightarrow\Re^n$ be a Lipschitz map such that $\Gamma_F$ is Lipschitz triangulable and traversable and $F$ covers an Eulerian trail in $\Gamma_F$. The following conditions are equivalent:
\begin{itemize}
\item[(a)]
The path $F:[a,b]\rightarrow\Gamma_F$ is closed (i.e., $F(a)=F(b)$) and represents the unit element of $\pi_1(\Gamma_F)$ (i.e., it is contractible in $\Gamma_F$);
\item[(b)]
$F$ satisfies the composition condition, i.e., $F=\tilde F\circ\tilde f$ for continuous maps $\tilde F: [a,b]\rightarrow\Gamma_F$ and $\tilde f: [a,b]\rightarrow [a,b]$ such that $\tilde f(a)=\tilde f(b)$;
\item[(c)]
For all possible $i_1,\dots, i_k\in\{1,\dots, n\}$ the iterated integrals
\[
\int\cdots\int_{a\leq t_{1}\leq\cdots\leq t_{k}\leq b}f'_{i_{k}}(t_{k})\cdots f'_{i_{1}}(t_{1})\ \!dt_{k}\cdots dt_{1}
\]
are equal to zero;
\item[(d)]
The path $F:[a,b]\rightarrow\Gamma_F$ is closed and represents an element of the commutator subgroup $[\pi_1(\Gamma_F),\pi_1(\Gamma_F)]\subset\pi_1(\Gamma_F)$;
\item[(e)]
For all possible $d_1,\dots , d_n\in\Z_+$ and $i\in\{1,\dots, n\}$ the moments 
\[ 
\int_a^bf_1(t)^{d_1}\cdots f_n(t)^{d_n}f'_{i}(t)\,dt
\]
are equal to zero.
\end{itemize}
\end{Th}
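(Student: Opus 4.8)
The plan is to establish the cycle of equivalences by a combination of topological and analytic arguments, exploiting the traversability hypothesis to reduce everything to the structure of paths on a graph. First I would prove the chain $(a)\Rightarrow(d)$ trivially (the unit element lies in the commutator subgroup), and $(c)\Rightarrow(e)$ by a standard shuffle-product argument: each moment $\int_a^b f_1^{d_1}\cdots f_n^{d_n}f_i'\,dt$ is, up to a positive integer multiple, a symmetrization of an iterated integral of length $d_1+\cdots+d_n+1$ in the one-forms $df_j$, so vanishing of all iterated integrals forces vanishing of all moments. For the reverse direction $(e)\Rightarrow(c)$, I would use the fact that the iterated integrals of a given weight are spanned (modulo lower-weight ones, via integration by parts / the shuffle relations) by the moments together with products of lower iterated integrals, so an induction on weight recovers vanishing of all iterated integrals from vanishing of all moments; this is exactly the algebraic content behind Theorem 1.2 of \cite{B6} referenced in the introduction.

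The topological core is the equivalence $(a)\Leftrightarrow(c)$ together with $(d)\Rightarrow(a)$, and here the traversability hypothesis is essential. Since $\Gamma_F$ is a finite one-dimensional $CW$-complex homotopy equivalent to a wedge of $m$ circles, $H_1(\Gamma_F)\cong\Z^m$ and $[\pi_1,\pi_1]$ is the kernel of the Hurewicz map $\pi_1(\Gamma_F)\to H_1(\Gamma_F)$. For $(d)\Rightarrow(a)$ one needs that a closed Eulerian-trail-covering path whose homology class is zero is already null-homotopic; since $\Gamma_F$ is traversable and $F$ covers an Eulerian trail, the path $F$ factors as $\gamma\circ\gamma'$ with $\gamma:I\to\Gamma_F$ an Eulerian trail and $\gamma':[a,b]\to I$, so the homotopy class of $F$ is determined by the class of the loop $\gamma'$ in $I$, which is an interval (or circle) — here I would invoke the structure theorem from \cite{B6} (Theorems 1.5 and the composition-condition results) that on a traversable graph, for paths covering an Eulerian trail, the Hurewicz map is injective, i.e. homologically trivial implies contractible. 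The equivalence $(a)\Leftrightarrow(b)$ is the composition condition: if $F=\gamma\circ\gamma'$ and $F$ is contractible, then $\gamma'$ is a contractible loop in the arc $I\subset\mathbb S$, hence (after a homotopy) has image in a proper subarc and can be written as $\gamma'=\iota\circ\tilde f$ with $\tilde f:[a,b]\to[a,b]$, $\tilde f(a)=\tilde f(b)$; conversely the composition condition makes $F$ manifestly a composition of a loop in $[a,b]$ with a map, hence contractible — and then $(b)\Rightarrow(c)$ follows because iterated integrals are invariant under the composition condition (a change-of-variables / pullback computation: $\int f_{i_k}'\cdots f_{i_1}'$ pulls back to an iterated integral over $[\tilde f(a),\tilde f(b)]=\{pt\}$, so it vanishes).

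I would organize the final write-up as $(a)\Rightarrow(d)$, $(d)\Rightarrow(a)$, $(a)\Leftrightarrow(b)$, $(b)\Rightarrow(c)$, $(c)\Rightarrow(e)$, $(e)\Rightarrow(c)$, which closes the loop through all five conditions. The main obstacle I anticipate is the implication $(d)\Rightarrow(a)$, i.e. showing that homological triviality upgrades to homotopic triviality in this Lipschitz, Eulerian-trail setting; on a general space this is false, and it is precisely the traversability of $\Gamma_F$ together with the hypothesis that $F$ covers an Eulerian trail that makes it work, via the injectivity of the Hurewicz map on the relevant subgroup of $\pi_1$. I expect this step to rely most heavily on the cited results of \cite{B6}, and the cleanest route is to transport the problem to the graph level: realize $\pi_1(\Gamma_F)$ combinatorially from the Eulerian trail, observe that the sequence of edges traversed determines a reduced word whose abelianization is the homology class, and conclude that a trivial abelianization forces a trivial reduced word. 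The remaining implications are comparatively routine: the shuffle algebra of iterated integrals for $(c)\Leftrightarrow(e)$, and a direct pullback computation for the composition condition.
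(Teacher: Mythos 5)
Your plan does not actually close the equivalence cycle. The list you propose---$(a)\Rightarrow(d)$, $(d)\Rightarrow(a)$, $(a)\Leftrightarrow(b)$, $(b)\Rightarrow(c)$, $(c)\Rightarrow(e)$, $(e)\Rightarrow(c)$---yields $(a)\Leftrightarrow(b)\Leftrightarrow(d)$ and $(c)\Leftrightarrow(e)$ and a one-way arrow $(b)\Rightarrow(c)$, but there is no implication from $\{(c),(e)\}$ back to $\{(a),(b),(d)\}$. You identify $(a)\Leftrightarrow(c)$ as ``the topological core'' earlier, yet you never sketch how to prove $(c)\Rightarrow(a)$, and it is absent from your final organization. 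The paper closes this by citing $(a)\Leftrightarrow(c)$ from \cite[Cor.\,1.12,\,Th.\,1.14]{B1} and $(d)\Leftrightarrow(e)$ from \cite[Cor.\,3.11]{B2}; without these (or an equivalent substitute) the chain dangles.

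The second, more substantive problem concerns your handling of the implication starting from $(d)$. You state that homological triviality upgrades to homotopic triviality ``via the injectivity of the Hurewicz map on the relevant subgroup of $\pi_1$.'' This is not an argument but a restatement of the desired conclusion, and as stated it is opaque: the Hurewicz map $\pi_1(\Gamma_F)\to H_1(\Gamma_F)$ has kernel exactly $[\pi_1,\pi_1]$, which is highly nontrivial whenever $m\ge 2$, so ``injectivity'' cannot be the mechanism. What actually happens in the paper is a concrete lifting argument proving $(d)\Rightarrow(b)$, which is the single implication proved from scratch in the text. Because $F$ covers an Eulerian trail, $F=\tilde F\circ\tilde f$ with $\tilde f:[a,b]\to I\subseteq\mathbb S$; one lifts $\tilde f$ through the universal cover $p:\Re\to\mathbb S$ to a path $f'$ with $f'(a)=0$, deforms $f'$ linearly to $g(s)=\tfrac{f'(b)}{b-a}(s-a)$ while keeping the image inside $f'([a,b])$, and then analyzes the decomposition of $p\circ g$ into $n=\lfloor f'(b)\rfloor$ full circuits plus a residual arc of length $l=f'(b)-n$. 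The Eulerian-trail hypothesis forces $\tilde F\circ e$ and $\tilde F\circ\gamma$ to be $\Q$-linearly independent in $H_1(\Gamma_F,\Q)$, so the homology constraint $nh_1+h_2=0$ forces $n=0$ and $l=0$; hence $f'$ is a closed path in $\Re$ and $F=(\tilde F\circ p)\circ f'$ is the composition condition $(b)$. Your proposal attributes this entirely to cited results in \cite{B6}, but the paper is explicit that \cite{B6} handles only the analytic case and that the Lipschitz extension is new content carried out in Section 9. You should either reproduce this lifting argument or point precisely to the case analysis on $(n,l)$; neither is present in your sketch. Your treatment of $(c)\Leftrightarrow(e)$ via shuffle relations is a plausible alternative route to the paper's citation of $(d)\Leftrightarrow(e)$, but it is asserted rather than argued and would also need to be made precise.
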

Equivalence of (a) and (c) for Lipschitz maps $F$ with Lipschitz triangulable images $\Gamma_F$ is established in \cite[Cor.\,1.12,\, Th.\,1.14]{B1}. In this setting, these conditions are equivalent to a weaker condition, the so-called {\em tree composition condition}, see \cite[Cor.\,3.5]{BY},
\begin{itemize}
\item[(b$'$)]
{\em There exist a finite tree $T$ and continuous maps $\tilde F: T\rightarrow\Gamma_F$ and $\tilde f: [a,b]\rightarrow T$ with $\tilde f(a)=\tilde f(b)$ such that $F=\tilde F\circ\tilde f$.}
\end{itemize}
Thus, (b) implies (a) and (c).

Clearly, (a) implies (d). Equivalence of (d) and (e)  for Lipschitz maps with Lipschitz triangulable images is proved in \cite[Cor.\,3.11]{B2}. Finally, in \cite[Sec.\,3.1]{B6} the implication (d)$\Rightarrow$(b) is proved for $F$ analytic. The proof in the general case goes along the same lines and will be describes in Section 9. This will complete the proof of Theorem \ref{teo4.2}.

Note that if $\pi_1(\Gamma_F)$ is free group with at least two generators, then one can easily construct a closed Lipschitz path $[a,b]\rightarrow\Gamma_F$ which represents an element of $[\pi_1(\Gamma_F),\pi_1(\Gamma_F)]$ but is not contractible. Thus, (d)$\not\Rightarrow$(a) for generic $F$.
\subsection{Center Problem for ODEs} 
In this section we formulate several applications of our main results to the center problem for ODEs.

For real nonconstant Lipschitz functions $f_1,\dots, f_n$ on $[a,b]$ equal to zero at $a$, consider the differential equation
\begin{equation}\label{eq4.3}
\frac{dv}{dt}=\sum_{j=1}^n f_j' v^{j+1}.
\end{equation}

We will assume that the image $\Gamma_F$ of the Lipschitz map $F=(f_1,\dots, f_n): [a,b]\rightarrow\Re^n$ is Lipschitz triangulable.

We say that equation \eqref{eq4.3} determines a {\em universal center} if $F$ satisfies one of the equivalent conditions (a), (b$'$) or (c).

The following result is an immediate consequence of Proposition \ref{prop3.1}.

\begin{Th}\label{teo4.3.1}
If $\Gamma_F$ does not contain subsets homeomorphic to a circle and $F$ is a closed path in $\Gamma_F$, then the corresponding to $F$ equation \eqref{eq4.3} determines a universal center.
\end{Th}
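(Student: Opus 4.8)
The plan is to derive the statement directly from Proposition \ref{prop3.1} applied to the map $G:=F$ itself. Recall that, by the definition adopted in Section~4.2, equation \eqref{eq4.3} determines a universal center precisely when $F$ satisfies condition (c) of Theorem \ref{teo4.2}, i.e. when all iterated integrals $\int\cdots\int_{a\le t_1\le\cdots\le t_k\le b} f'_{i_k}(t_k)\cdots f'_{i_1}(t_1)\,dt_k\cdots dt_1$ vanish for all choices of $i_1,\dots,i_k\in\{1,\dots,n\}$. So it suffices to show that this vanishing is forced by the two hypotheses: that $\Gamma_F$ contains no subset homeomorphic to a circle, and that $F$ is a closed path.

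First I would note that $F$ belongs to $\mathcal G_F$ trivially, since $F([a,b])=\Gamma_F\subset\Gamma_F$, so Proposition \ref{prop3.1} is applicable with $G=F$ once its hypotheses are checked. The circle-freeness of $\Gamma_F$ is assumed outright. The only remaining point is to verify that the moments of degree zero from $f_1,\dots,f_n$ vanish, i.e. that $\int_a^b f_i'(t)\,dt=0$ for every $i$. But this integral equals $f_i(b)-f_i(a)$, and since the $f_i$ are assumed to vanish at $a$ (standing assumption in Section~4.2) and $F$ is closed, one has $f_i(b)=f_i(a)=0$; hence all degree-zero moments vanish.

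With both hypotheses of Proposition \ref{prop3.1} verified, the proposition yields at once that all iterated integrals from $f_1,\dots,f_n$ are zero, which is exactly condition (c). By the definition of universal center recalled above, equation \eqref{eq4.3} therefore determines a universal center, and the proof is complete.

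I do not expect any genuine obstacle here: the substantive work is contained in Proposition \ref{prop3.1} (proved in \cite{B1}) together with the equivalence (a)$\Leftrightarrow$(c) of Theorem \ref{teo4.2}. The present theorem is essentially the bookkeeping step that translates the hypotheses ``$F$ closed, $\Gamma_F$ circle-free'' into the hypotheses of that proposition and reads its conclusion back in the terminology of centers; the only thing one must be slightly careful about is invoking the standing normalization $f_i(a)=0$ so that closedness of $F$ really does give vanishing of the degree-zero moments.
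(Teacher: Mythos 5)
Your proof is correct and follows exactly the route the paper intends: the paper states Theorem \ref{teo4.3.1} is ``an immediate consequence of Proposition \ref{prop3.1},'' and you have supplied precisely the bookkeeping that makes this immediate (observing $F\in\mathcal G_F$, that closedness of $F$ forces the degree-zero moments to vanish, and that vanishing of all iterated integrals is condition (c) of Theorem \ref{teo4.2}, hence a universal center). Nothing to add; a minor remark is that you do not even need the normalization $f_i(a)=0$ here, since $\int_a^b f_i'\,dt=f_i(b)-f_i(a)=0$ already follows from $F(a)=F(b)$ alone.
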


Suppose $\Gamma\Subset\Re^n$ is a compact Lipschitz triangulable curve. By $P(\Gamma)$ we denote the set of closed Lipschitz paths $[a,b]\rightarrow\Gamma$ that cover Eulerian trails of traversable subcurves of $\Gamma$. We set 
\[
P_n:=\bigcup_{\Gamma\Subset\Re^n}P_n(\Gamma).
\]

Our next result relates Proposition \ref{prop2.2} to Center Problem for Abel differential equations. We retain notation of this proposition.

Let $\mathcal A\subset P_2$ consist of paths in curves $\Gamma\Subset\Re^2$ for which areas of (simply connected) domains $D_i$ with boundaries $[\ell_i]$ generating $H_1(\Gamma)$ are linearly independent over $\Q$.
\begin{Th}\label{teo4.4.1}
Consider the Abel equation on $[a,b]$
\begin{equation}\label{eq4.2}
\frac{dv}{dt}=f_1' v^{2}+f_2' v^3,
\end{equation}
where $F:=(f_1,f_2):[a,b]\rightarrow\Gamma_F\Subset\Re^2$, $F(a)=F(b)=0$, belongs to $\mathcal A$.
Then this equation determines a center if and only if 
\[
\int_a^b f_1(t)\cdot f_2'(t)\, dt=0.
\]
Moreover, all centers of \eqref{eq4.2} are universal.
\end{Th}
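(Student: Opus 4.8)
The plan is to identify the moment $\int_a^b f_1 f_2'$ with the first obstruction to \eqref{eq4.2} determining a center, and then to use Proposition~\ref{prop2.2} and Theorem~\ref{teo4.2} to propagate the vanishing of this single moment to the vanishing of all moments from $f_1,f_2$. First I would check that $F$ meets the hypotheses of Theorem~\ref{teo4.2}: since $F\in\mathcal A\subset P_2$, by definition $F$ covers an Eulerian trail of a traversable subcurve of the ambient curve, and that subcurve, being the image of $F$, is $\Gamma_F$; hence $\Gamma_F$ is Lipschitz triangulable and traversable and $F$ covers an Eulerian trail in $\Gamma_F$, so Theorem~\ref{teo4.2} applies. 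If $\Gamma_F$ contains no circle, Proposition~\ref{prop3.1} already shows that the closed path $F$ determines a universal center, all its moments (in particular $\int_a^b f_1 f_2'$) vanish, and the asserted equivalence holds trivially on both sides; so I may assume $\pi_1(\Gamma_F)$ is free of rank $m\ge 1$, and then $F\in\mathcal A$ means exactly that $A(D_1),\dots,A(D_m)$ are linearly independent over $\Q$, which is the hypothesis of Proposition~\ref{prop2.2}.

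For the implication $(\Leftarrow)$, suppose $\int_a^b f_1(t) f_2'(t)\,dt=0$. The degree-zero moments $\int_a^b f_i'\,dt=f_i(b)-f_i(a)$ vanish because $F(a)=F(b)=0$; hence Proposition~\ref{prop2.2}, applied with $G=F$, forces every moment from $f_1,f_2$ to vanish, and the equivalence (e)$\Leftrightarrow$(c) of Theorem~\ref{teo4.2} then gives that all iterated integrals from $f_1',f_2'$ vanish, i.e., \eqref{eq4.2} determines a universal center. It is in particular a center, since each Taylor coefficient of the return map $v_0\mapsto v(b;v_0)$ is a fixed linear combination of values of iterated integrals of $f_1',f_2'$.

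For the converse $(\Rightarrow)$ I would expand the solution as $v(t;v_0)=\sum_{k\ge1}v_k(t)v_0^k$ with $v_1(a)=1$ and $v_k(a)=0$ for $k\ge2$, and integrate the recursion coming from $v'=f_1'v^2+f_2'v^3$. This yields $v_1\equiv1$, $v_2=f_1$, $v_3=f_1^2+f_2$, and $v_4'=(f_1^3)'+2(f_1 f_2)'+f_1 f_2'$, so, using $f_1(a)=f_2(a)=0$,
\[
v_4(t)=f_1(t)^3+2f_1(t)f_2(t)+\int_a^t f_1(s)f_2'(s)\,ds .
\]
Since $f_1(b)=f_2(b)=0$, this gives $v_2(b)=v_3(b)=0$ and $v_4(b)=\int_a^b f_1(t)f_2'(t)\,dt$. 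If \eqref{eq4.2} determines a center then $v_k(b)=0$ for every $k\ge2$, whence $\int_a^b f_1 f_2'\,dt=0$. Combined with the previous paragraph this also proves the ``moreover'' part: every center of \eqref{eq4.2} is in fact a universal center.

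I do not expect a genuine obstacle here. The substantive input is Proposition~\ref{prop2.2}, which may be assumed; everything else is assembly --- checking that the hypotheses of Proposition~\ref{prop2.2} and Theorem~\ref{teo4.2} hold for the present $F$, the elementary order-by-order integration of the return-map recursion through order four, and the standard fact that vanishing of all iterated integrals from $f_1',f_2'$ makes the Poincar\'e return map of \eqref{eq4.2} the identity. The only steps needing a little care are the computation giving $v_4(b)=\int_a^b f_1 f_2'$ and the degenerate case $m=0$, which is disposed of via Proposition~\ref{prop3.1} as indicated above.
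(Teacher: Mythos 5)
Your proposal is correct and follows essentially the same route as the paper: reduce to the single moment $\int_a^b f_1 f_2'$ by identifying it with the first nontrivial Taylor coefficient of the Poincar\'e return map, then invoke Proposition~\ref{prop2.2} to kill all remaining moments and Theorem~\ref{teo4.2} to upgrade to a universal center. The only difference is cosmetic: the paper quotes from \cite{B1} that the fourth return-map coefficient is $3\int_a^b f_1 f_2' + 2\int_a^b f_2 f_1'$ and then simplifies by integrating by parts using $F(a)=F(b)=0$, whereas you rederive $v_4(b)=\int_a^b f_1 f_2'$ directly from the order-by-order recursion (the two are consistent, since $\int_a^b f_1 f_2' + \int_a^b f_2 f_1' = (f_1 f_2)\big|_a^b = 0$); your version is self-contained, and you also make explicit the verification that $F\in\mathcal A$ satisfies the hypotheses of Theorem~\ref{teo4.2} and the disposal of the homotopically trivial case via Proposition~\ref{prop3.1}, both of which the paper leaves implicit.
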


For a compact Lipschitz triangulable curve $\Gamma\Subset\Re^2$ with $\pi_1(\Gamma)\ne\{1\}$ by $N_\Gamma\in\N$ we denote the number defined in \eqref{const} (with $\Gamma_F$ replaced by $\Gamma$). We also set $N_\Gamma:=0$ for curves $\Gamma$ with $\pi_1(\Gamma)=\{1\}$. Let $C_2(d)$ consist of all Lipschitz triangulable curves $\Gamma\Subset\Re^2$ with $N_\Gamma\le d$. By $C_2$ we denote the set of all compact Lipschitz triangulable curves in $\Re^2$. Then we have $C_2(d_1)\subseteq C_2(d_2)$ for $d_1\le d_2$ and $C_2=\cup_{d\in\Z_+}C_2(d)$.
We set
\[
P_2(d):=\bigcup_{\Gamma\in C_2(d)}P_2(\Gamma).
\]

\begin{Th}\label{teo4.5}
The set of universal centers of equations \eqref{eq4.2} with $F=(f_1,f_2)\in P_2(d)$ is defined by vanishing of moments
\[
\int_a^b f_{1}(t)^{d_1}\cdot f_{2}(t)^{d_2}\cdot f'_{2}(t)\,dt\quad  \text{with}\quad \max\{d_1-1,d_2\}\le d.
\] 
\end{Th}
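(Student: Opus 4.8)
The plan is to deduce Theorem \ref{teo4.5} by combining Theorem \ref{te2.1} with the equivalence of conditions (a)--(e) in Theorem \ref{teo4.2}. First I would unwind the definitions: a pair $F=(f_1,f_2)\in P_2(d)$ by construction is a closed Lipschitz path into some Lipschitz triangulable curve $\Gamma\in C_2(d)$ that covers an Eulerian trail of a traversable subcurve $\Gamma_0\subset\Gamma$, and since $N_{\Gamma_0}\le N_\Gamma\le d$ the curve $\Gamma_0$ (call it $\Gamma_F$, the image of $F$) itself lies in $C_2(d)$, so $N_{\Gamma_F}\le d$. By the definition in \S4.2, equation \eqref{eq4.2} associated with such $F$ determines a universal center exactly when $F$ satisfies condition (a) of Theorem \ref{teo4.2}, hence — by that theorem, which applies because $\Gamma_F$ is Lipschitz triangulable, traversable, and $F$ covers an Eulerian trail — exactly when condition (e) holds, i.e. when \emph{all} moments $\int_a^b f_1^{d_1}f_2^{d_2}f_2'\,dt$ (and, by integration by parts / the relation $f_1'f_2^{d_2}\!\cdot\!$-type identities encoded in the iterated-integral formalism, all moments of the general form \eqref{eq1}) vanish.

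The second and main step is to show that this infinite system of vanishing conditions is already implied by the finite subsystem $\max\{d_1-1,d_2\}\le d$. For this I invoke Theorem \ref{te2.1}: since $\Gamma_F$ is Lipschitz triangulable with $\pi_1(\Gamma_F)$ free of some rank $m$ (the homotopically trivial case being covered by Theorem \ref{teo4.3.1} / Proposition \ref{prop2.1}, where there is nothing to prove), and since $F(a)=F(b)$ guarantees the degree-zero moments vanish so that the hypotheses of Theorem \ref{te2.1} are met, there is a $\widetilde N\le N_{\Gamma_F}\le d$ such that every moment from $g_1,g_2$ — in particular every moment from $f_1,f_2$ — is a fixed real-linear combination of the $m$ moments $\int_a^b g_1^{d_1}g_2^{d_2}g_2'\,dt$ with $\max\{d_1-1,d_2\}\le\widetilde N\le d$. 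Consequently, if all moments with $\max\{d_1-1,d_2\}\le d$ vanish, then a fortiori those with $\max\{d_1-1,d_2\}\le\widetilde N$ vanish, hence all moments vanish, hence condition (e) holds, hence \eqref{eq4.2} is a universal center. The converse implication is trivial: a universal center has all moments zero, in particular the listed finite family. This gives the "if and only if'' description of the universal-center locus claimed in the theorem.

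The one point requiring genuine care — and what I expect to be the main obstacle — is the bookkeeping to make sure the \emph{same} bound $d$ works uniformly over all $F\in P_2(d)$ simultaneously, i.e. that the cutoff in the statement is the global constant $d$ and not a curve-dependent $\widetilde N$. This is handled by the monotone containment $C_2(d_1)\subseteq C_2(d_2)$ for $d_1\le d_2$ together with the inequality $\widetilde N\le N_{\Gamma_F}\le N_\Gamma\le d$ from Theorem \ref{te2.1}; one must also check that passing from $\Gamma$ to the actual image subcurve $\Gamma_F$ does not increase the relevant geometric quantities $A_{\Gamma_F},d_{\Gamma_F},1/r_{\Gamma_F}$ defining $N$ in \eqref{const}, which holds because the connected components $S_j$ and their inscribed/circumscribed squares only shrink (or drop out) when one restricts to a subcurve, so $N_{\Gamma_F}\le N_\Gamma$. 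A secondary subtlety is the reduction, within condition (e), from general moments \eqref{eq1} (with $f_i'$ either $f_1'$ or $f_2'$) to the normalized family involving only $g_2'$; this is exactly the content already built into Theorem \ref{te2.1}, whose conclusion is phrased in terms of the $g_2'$-moments, so no extra argument is needed beyond citing it. With these checks in place the proof is complete.
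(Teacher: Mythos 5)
Your overall plan (combine Theorem \ref{te2.1} with the equivalence of (a)--(e) in Theorem \ref{teo4.2}) is the same route the paper takes, and most of the proposal is sound. But the step you flag as ``requiring genuine care'' contains a genuine error.

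You assert $N_{\Gamma_F}\le N_\Gamma$ on the grounds that ``the connected components $S_j$ and their inscribed/circumscribed squares only shrink (or drop out) when one restricts to a subcurve.'' This is backwards. Passing from $\Gamma$ to a subcurve $\Gamma_F$ means \emph{deleting} arcs, and the arcs of $\Gamma\setminus\Gamma_F$ are typically walls separating adjacent components of $S$; removing them makes components \emph{merge} and grow, so $A_{\Gamma_F}$ can increase and $N_{\Gamma_F}$ can exceed $N_\Gamma$. Concretely, take $\Gamma$ to be $\partial\bigl([0,2]\times[0,1]\bigr)\cup\bigl(\{1\}\times[0,1]\bigr)$ (a ``theta''), with the two unit squares as generators, so $A_\Gamma=1$, $r_\Gamma=1$, $d_\Gamma=1$ and $N_\Gamma=\lfloor 27\pi/2\rfloor+1=43$. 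If $F$ just traverses $\partial\bigl([0,2]\times[0,1]\bigr)$, then $\Gamma_F$ is that rectangle boundary, for which $A_{\Gamma_F}=2$ while $r_{\Gamma_F}=d_{\Gamma_F}=1$, giving $N_{\Gamma_F}=\lfloor 27\pi\rfloor+1=85>N_\Gamma$. So your lemma fails, and with it the chain $\widetilde N\le N_{\Gamma_F}\le N_\Gamma\le d$.

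The fix is to not pass to $\Gamma_F$ at all in the finiteness step. Theorem \ref{te2.1} is stated for \emph{any} Lipschitz map $G$ with $G(a)=G(b)$ and image merely contained in the given curve; it does not require the image to be all of that curve. So apply Theorem \ref{te2.1} directly to $\Gamma\in C_2(d)$: for $F:[a,b]\to\Gamma$ closed, every moment from $f_1,f_2$ is a real-linear combination of the moments $\int_a^b f_1^{d_1}f_2^{d_2}f_2'\,dt$ with $\max\{d_1-1,d_2\}\le\widetilde N\le N_\Gamma\le d$. (Equivalently: $H_1(\Gamma_F)\hookrightarrow H_1(\Gamma)$ for a subcomplex of a graph, and the functionals in \eqref{eq3.7} span the moment space on $H_1(\Gamma,\Re)$, hence on the subspace $H_1(\Gamma_F,\Re)$.) Theorem \ref{teo4.2} is then applied with $\Gamma_F$ the actual image, as you do, to convert ``all moments vanish'' into ``universal center.'' With this one change the argument closes, and no comparison between $N_{\Gamma_F}$ and $N_\Gamma$ is needed.
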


To formulate a similar result in $\Re^n$, we restrict ourselves to the class $C_n^*$ of compact Lipschitz triangulable curves $\Gamma\Subset\Re^n$ satisfying condition (*). For each $\Gamma\in C_n^*$ with $\pi_1(\Gamma)\ne\{1\}$ the number $\bar{N}_{\mathcal T}$ depending on the fixed triangulation $\mathcal T$ of $\Gamma$ is given by equation \eqref{const1} (with $\Gamma_F$ replaced by $\Gamma$). We define
\[
\bar{N}_{\Gamma}:=\min_{\mathcal T}\bar{N}_{\mathcal T},
\]
where the minimum is taken over all triangulations satisfying condition (*).

We also set $\bar{N}_\Gamma:=0$ for curves $\Gamma$ with $\pi_1(\Gamma)=\{1\}$.  Let $C_n^*(d)$ consist of all curves $\Gamma\in C_n^*$ with $\bar{N}_\Gamma\le d$. Then we have $C_n^*(d_1)\subseteq C_n^*(d_2)$ for $d_1\le d_2$ and $C_n^*=\cup_{d\in\Z_+}C_n^*(d)$.
We set
\[
P_n^*(d):=\bigcup_{\Gamma\in C_n^*(d)}P_n(\Gamma).
\]
\begin{Th}\label{teo4.6}
Consider equations \eqref{eq4.3} such that $F=(f_1,\dots, f_n): [a,b]\rightarrow\Re^n$, $F(a)=F(b)=0$, belong to $P_n^*(d)$. Then 
the set of universal centers of this class of equations is defined by vanishing of moments
\[
\int_a^b f_1(t)^{d_1}\cdots f_n(t)^{d_n}\cdot f_i'(t)\, dt\quad  \text{with}\quad \max_{1\le j\le n}\{d_j\}\le d,\quad 1\le i\le n.
\]
\end{Th}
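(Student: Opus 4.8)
The plan is to obtain Theorem~\ref{teo4.6} by assembling Theorems~\ref{teo4.2}, \ref{teo4.3.1} and \ref{te3.4}, with the definitions of $P_n^*(d)$ and $C_n^*(d)$ supplying exactly the hypotheses these results need. First I would unpack $F\in P_n^*(d)$: by definition there is a compact Lipschitz triangulable curve $\Gamma\in C_n^*(d)$ with $F\in P_n(\Gamma)$, so the image $\Gamma_F:=F([a,b])$ is a traversable (hence Lipschitz triangulable) subcurve of $\Gamma$, the map $F$ covers an Eulerian trail in $\Gamma_F$, and, since $F(a)=F(b)=0$, the path $F$ is closed. Consequently Theorem~\ref{teo4.2} applies to $F$.

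The necessity direction is then immediate: if \eqref{eq4.3} determines a universal center, $F$ satisfies condition~(a) of Theorem~\ref{teo4.2}, hence also condition~(e), so every moment from $f_1,\dots,f_n$ vanishes; in particular so do those listed in the statement.

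For sufficiency I would split on whether $\pi_1(\Gamma)$ is trivial. If $\pi_1(\Gamma)=\{1\}$, then $\Gamma$, and with it $\Gamma_F$, contains no embedded circle, so Theorem~\ref{teo4.3.1} (equivalently Proposition~\ref{prop3.1}) already forces \eqref{eq4.3} to be a universal center and no moment condition is needed. If $\pi_1(\Gamma)\neq\{1\}$, it is free of some rank $m\ge 1$, and by the definition of $C_n^*(d)$ there is a triangulation $\mathcal T$ of the union of generating cycles of $H_1(\Gamma)$ satisfying condition~(*) with $\bar N_{\mathcal T}=\bar N_\Gamma\le d$. The key move is to apply Theorem~\ref{te3.4} \emph{to the ambient curve $\Gamma$} with the test map $G:=F$ itself, which is legitimate because $F$ is Lipschitz, $F([a,b])=\Gamma_F\subseteq\Gamma$, and $F(a)=F(b)$. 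Theorem~\ref{te3.4} then expresses every moment $\int_a^b f_1(t)^{d_1}\cdots f_n(t)^{d_n}f_i'(t)\,dt$ as a finite real linear combination, with coefficients depending only on $\Gamma$, of the $m$ moments of the same shape with $\max_j d_j\le\bar N_{\mathcal T}\le d$. Assuming those $m$ moments vanish, all moments from $f_1,\dots,f_n$ vanish, i.e.\ $F$ satisfies condition~(e) of Theorem~\ref{teo4.2}; invoking the equivalence (e)$\Leftrightarrow$(a) of that theorem concludes that \eqref{eq4.3} determines a universal center.

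I do not expect a genuine analytic obstacle here, since the statement is essentially a packaging of the earlier results; the points requiring care are organizational. The main one is the decision to feed $\Gamma$ (the curve whose combinatorial complexity is controlled, $\bar N_\Gamma\le d$) rather than $\Gamma_F$ into Theorem~\ref{te3.4}, together with the verification that the traversability and Eulerian-trail hypotheses, which enter only through Theorem~\ref{teo4.2}, and condition~(*) with the bound $\bar N_\Gamma\le d$, which enter only through Theorem~\ref{te3.4}, are precisely what membership in $P_n^*(d)$ provides. A secondary bookkeeping point is the degenerate case where $\pi_1(\Gamma_F)$ is trivial while $\pi_1(\Gamma)$ is not: the argument above still runs unchanged because Theorem~\ref{te3.4} is applied to $\Gamma$, and one could alternatively note directly that $F$ is then null-homotopic in $\Gamma_F$.
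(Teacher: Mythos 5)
Your proof is correct and follows the same route the paper takes: the paper's own proof simply states that Theorems \ref{te2.1}, \ref{te3.4} and \ref{teo4.2} yield the result, which is exactly the assembly you carry out (applying \ref{te3.4} to the ambient $\Gamma\in C_n^*(d)$ with $G:=F$ to reduce to the bounded family of moments, then using the equivalence (e)$\Leftrightarrow$(a) of Theorem~\ref{teo4.2}, with Proposition~\ref{prop3.1}/Theorem~\ref{teo4.3.1} handling the homotopically trivial case). Your explicit observation that $\Gamma$, not $\Gamma_F$, must be fed into Theorem~\ref{te3.4} is a useful clarification of a point the paper leaves implicit.
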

\begin{R}
{\rm (1) If in Theorems \ref{teo4.5}, \ref{teo4.6} $F$ belongs to the class of rectangular paths, that is, the image $\Gamma_F$ of $F$ belongs to the union of compact intervals translated from coordinate axes, then conditions of the theorems are necessary and sufficient for such an $F$ to determine a center of the corresponding differential equation. This follows from the result proved in \cite{B4} asserting that any center of equation \eqref{eq4.3} determining a rectangular path is universal. 

(2) If we consider equation \eqref{eq4.3} such that $f_1',\dots, f_n'$ are either real univariate polynomials on $[a,b]$ or trigonometric polynomials on $[0,2\pi]$ of degrees $\le d$, then this equation determines a universal center if and only if all moments from $f_1,\dots, f_n$ of degrees $2d-3$ in the first case and of degrees $4d-3$ in the second one vanish. This is the consequence of the theorem formulated in Remark \ref{rem3.2}  and Theorem \ref{teo4.2}.
}
\end{R}

\sect{Polynomial Approximation of Lipschitz Functions}
In this section we present an auxiliary result used in the subsequent proofs.

Let $f: \mathbb K^n\rightarrow\Re$ be a Lipschitz function whose Lipschitz constant is bounded by $L$ over each line parallel to a coordinate axis,
i.e., $|f(x)-f(y)|\le L\|x-y\|_{\ell_\infty^n}$ for all $x,y\in L$.
\begin{Proposition}\label{approx}
There exists a polynomial $p_k$ of degree $k$ in each variable such that
\[
\|f-p_k\|_{C(\mathbb K^n)}\le\frac{\pi n}{k}L.
\]
\end{Proposition}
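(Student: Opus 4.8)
The plan is to reduce the multivariate statement to the classical one-dimensional Jackson theorem by approximating one variable at a time. The key one-dimensional fact I would use is the following form of Jackson's theorem: if $g:[-1,1]\to\Re$ is Lipschitz with constant $M$, then there is an algebraic polynomial $q$ of degree $\le k$ with $\|g-q\|_{C[-1,1]}\le \frac{\pi M}{2k}$ (the constant $\frac{\pi}{2}$ being the sharp Jackson constant for the Lipschitz class on an interval of length $2$). Actually, to be safe with constants I would state it as $\|g-q\|\le \frac{\pi}{k}M$, which is all we need and follows from any standard version, e.g. via the de la Vall\'ee Poussin / Jackson kernel or via approximation of the periodic extension after a change of variable $x=\cos\theta$.

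First I would fix notation: write $x=(x_1,\dots,x_n)\in\mathbb K^n$ and for $0\le j\le n$ let $f_j$ denote the function obtained from $f$ by replacing its first $j$ arguments by polynomial approximations, so $f_0=f$ and $f_n=p_k$. Concretely, proceeding inductively, suppose $f_j$ has already been constructed as a function that is a polynomial of degree $\le k$ in each of $x_1,\dots,x_j$, is Lipschitz with constant $\le L$ along every line parallel to each of the remaining coordinate axes $x_{j+1},\dots,x_n$, and satisfies $\|f-f_j\|_{C(\mathbb K^n)}\le \frac{\pi j}{k}L$. To build $f_{j+1}$, for each fixed value of $(x_1,\dots,x_j,x_{j+2},\dots,x_n)$ apply the one-dimensional Jackson theorem in the variable $x_{j+1}$ to the slice of $f_j$; this produces a polynomial of degree $\le k$ in $x_{j+1}$ approximating that slice to within $\frac{\pi}{k}L$, and one must check that the coefficients depend on the frozen variables in a way that keeps the total Lipschitz control. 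This is cleanest if I take the approximation operator to be a fixed linear projection (a Jackson-type convolution operator), so that $f_{j+1}$ is obtained by applying that operator to $f_j$ in the $x_{j+1}$ slot: linearity and the operator norm bound then give both the new error estimate $\|f_j-f_{j+1}\|\le\frac{\pi}{k}L$ and the preservation of the Lipschitz constant $L$ in the remaining variables (since the operator commutes with difference quotients in those variables and has norm controlling the Lipschitz seminorm appropriately), as well as the fact that $f_{j+1}$ stays polynomial of degree $\le k$ in $x_1,\dots,x_j$ because a linear operator in $x_{j+1}$ does not disturb polynomial dependence already present in the other variables. Summing the telescoping errors $\|f-p_k\|\le\sum_{j=0}^{n-1}\|f_j-f_{j+1}\|\le \frac{\pi n}{k}L$ finishes the proof.

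The main obstacle is the bookkeeping in the inductive step: ensuring that after approximating in $x_{j+1}$ the resulting function is still genuinely a polynomial of degree $k$ in each earlier variable and still Lipschitz with constant $\le L$ (not merely $\le CL$) along the lines parallel to $x_{j+2},\dots,x_n$. Choosing the approximation device to be a single fixed linear operator of norm $1$ on $C[-1,1]$ that reproduces polynomials of degree $\le k$ and has Jackson error $\le\frac{\pi}{k}$ on the unit-Lipschitz class is what makes all three properties propagate simultaneously; verifying that such an operator exists with the stated constant is exactly the content of the classical Jackson theory and I would simply cite it. One should also note the hypothesis is stated with $\|x-y\|_{\ell_\infty^n}$, so the Lipschitz constant along each coordinate line is indeed $\le L$, which is precisely what the slicing argument needs; no rescaling between $\ell_\infty$ and $\ell_1$ norms is required here.
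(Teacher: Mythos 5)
Your proposal is correct and takes essentially the same route as the paper. Both arguments rest on the same two pillars: (i) a one-dimensional Jackson-type \emph{linear} operator of norm one onto (trigonometric, hence via $x=\cos\theta$ algebraic) polynomials of degree $\le k$ with error $\le\pi L/k$ on the $L$-Lipschitz class, and (ii) tensorizing it coordinate by coordinate and telescoping, using the norm-one bound so that each of the $n$ steps contributes at most $\pi L/k$. The only presentational difference is that the paper performs the substitution $x_i=\cos\theta_i$ in \emph{all} variables at once, works with $\hat f$ on $[-\pi,\pi]^n$ and even trigonometric polynomials throughout, and converts back to algebraic polynomials via Chebyshev only at the very end, whereas you fold the $\cos$--$\arccos$ conjugation into the one-dimensional step and stay on $[-1,1]^n$ with algebraic polynomials; these are conjugate constructions and yield the same bound.

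One small terminological slip: the Jackson convolution operator you invoke is \emph{not} a projection (by Lozinski--Kharshiladze a norm-bounded linear projection of $C$ onto polynomials of degree $\le k$ cannot exist), but your argument never actually uses the projection property --- as you yourself note, mere linearity of the operator in the slot $x_{j+1}$ preserves polynomial dependence on $x_1,\dots,x_j$, and the norm-one bound preserves the Lipschitz constant along the remaining coordinate lines. So the word ``projection'' should simply be dropped; the rest of the inductive/telescoping bookkeeping is sound.
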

\begin{proof}
Set
\begin{equation}\label{hat}
\hat f(x):=f(\cos x_1,\dots,\cos x_n),\quad x=(x_1,\dots, x_n)\in [-\pi,\pi]^n.
\end{equation}
Then $f$ is even $2\pi$-periodic in each variable.

We use the following result, see \cite[pp.375--376]{A}.
\medskip

\begin{Theorem} 
Let $h:\Re\rightarrow\Re$ be a continuous $2\pi$-periodic function. There exists a bounded linear operator $T_k\in \mathcal B(C[-\pi,\pi],\mathcal T_k)$ of norm one, where $\mathcal T_k\subset C[-\pi,\pi]$ is the subspace of trigonometric polynomials of degree $k$, such that
\[
\|h-T_k h\|_{C[-\pi,\pi]}\le 2\omega\left(\frac{\pi}{2k};h\right).
\]
If $h$ is even, then $T_k h$ is.
\end{Theorem}
(Here $\omega(\cdot\,; h)$ is the modulus of continuity of $h$.)

\medskip

Let $T_k^i: C([-\pi,\pi]^n)\rightarrow C([-\pi,\pi]^n)$ be a bounded linear operator of norm one defined by the application of the operator $T_k$ to the $i\,$th coordinate of a function, i.e., 
\[
(T_k^i g)(x_1,\dots, x_n):=\left(T_k g(x_1,\dots, x_{i-1},\cdot,x_{i+1},\dots, x_n)\right)(x_i),\quad g\in C([-\pi,\pi]^n).
\]
Then 
\[
T_k^i T_k^j=T_k^j T_k^i\quad\text{for all}\quad 1\le i,j\le n,
\]
and $T_k^{(n)}:=T_k^1\cdots T_k^n$ is a bounded linear operator of norm one on $C([-\pi,\pi]^n)$ with range in the subspace of trigonometric polynomials on $\Re^n$ of degree $k$ in each variable. Moreover, if $g$ is even in each variable, then $T_k^{(n)}g$ is even in each variable as well. In particular, this is valid for $T_k^{(n)}\hat f$, where $\hat f$ is defined in \eqref{hat}. Moreover, since
$|\cos t'-\cos t''|\le |t'-t''|$ for all $t',t''\in\Re$, the above Theorem implies that
\[
\|\hat f-T_k^i\hat f\|_{C([-\pi,\pi]^n)}\le \sup_{\hat x_i\in [-\pi,\pi]^{n-1}}\left\{2\omega\left(\frac{\pi}{2k};\hat f(x_1,\dots, x_{i-1},\cdot,x_{i+1},\dots, x_n)\right)\right\}\le\frac{\pi}{k}L,
\]
where $\hat x_i:=(x_1,\dots, x_{i-1},x_{i+1},\dots, x_n)$.

Finally, since $\|T_k^i\|=1$ for all $1\le i\le n$,
\[
\begin{array}{l}
\displaystyle
\|\hat f-T_k^{(n)}\hat f\|_{C([-\pi,\pi]^n)}\le \|\hat f-T_k^1\hat f\|_{C([-\pi,\pi]^n)}+\|T_k^1\|\cdot\|\hat f-T_k^2\hat f\|_{C([-\pi,\pi]^n)}+\cdots\\ \\
\displaystyle
+\left\|\prod_{i=1}^{n-1}T_k^i\right\|_{C([-\pi,\pi]^n)}\cdot \|\hat f- T_k^n\hat f\|_{C([-\pi,\pi]^n)}\le \frac{\pi n}{k}L.
\end{array}
\]
Passing from $\hat f$ to $f$ we get 
\[
\sup_{(t_1,\dots, t_n)\in \mathbb K^n}\left\{|f(t_1,\dots, t_n)-(T_k^{(n)}\hat f)(\cos^{-1} t_1,\dots,\cos^{-1} t_n)|\right\}\le \frac{\pi n}{k}L.
\]

But $T_k^{(n)}\hat f$ is even in each variable and therefore it is a polynomial in $\cos (s\,x_i)$, $1\leq s\le k$, $1\le i\le n$. Since $\cos(s\cos^{-1}x)=T_s(x)$ (the Chebyshev polynomial of degree $s$), $p_k(t_1,\dots, t_n):=(T_k^{(n)}\hat f)(\cos^{-1} t_1,\dots,\cos^{-1} t_n)$ is a polynomial of degree $k$ in each variable.
\end{proof}
\sect{Proofs of Proposition \ref{prop2.2} and Theorem \ref{te2.1}}
\subsection{Formulae for Moments}
Let $G:[a,b]\rightarrow\Gamma_F\Subset\Re^2$ be a closed Lipschitz path.
We have
\begin{equation}\label{eq2.1}
\int_a^b g_1(t)^{d_1}\cdot g_2(t)^{d_2}\cdot g_i'(t)\, dt=\int_G x_1^{d_1}x_2^{d_2} dx_i,\quad i=1,2.
\end{equation}
Here $x_1,x_2$ are coordinates on $\Re^2$ and $\int_G$ is the integral over the path $G$ (it is well defined because $G$ is Lipschitz).
Let $[G]$ be the image of the path $G$ in $H_1(\Gamma_F)\, (\cong \pi_1(\Gamma_F)/[\pi_1(\Gamma_F),\pi_1(\Gamma_F)])$.
Then $[G]=\sum_{j=1}^m n_j [\ell _j]$ for some $n_j\in\Z$, and since the integrands on the right-hand sides of \eqref{eq2.1} are $d$-closed forms on $\Gamma_F$ and curves $\ell_j$ are Lipschitz, 
\begin{equation}\label{eq2.2}
\int_G x_1^{d_1}x_2^{d_2} dx_i=\sum_{j=1}^m n_j\cdot\int_{\ell_j}x_1^{d_1}x_2^{d_2} dx_i,\quad i=1,2.
\end{equation}
Applying the Green formula to the integrals on the right-hand sides of the previous equation (it is possible by the definition of Lipschitz triangulable curves) we finally obtain for $k:=i+(-1)^{i+1}$
\begin{equation}\label{equ2.3}
\int_a^b g_1(t)^{d_1}\cdot g_2(t)^{d_2}\cdot g_i'(t)\, dt=\sum_{j=1}^m n_j\cdot\int\int_{D_j}(-1)^i d_i\cdot x_i^{d_i-1}x_k^{d_k}\,dx_1 dx_2.
\end{equation}
\subsection{Proof of Proposition \ref{prop2.2}}
The hypothesis of the proposition and \eqref{equ2.3} with $d_1=1$, $d_2=0$ imply that
\[
0=\int_a^b g_1(t)\cdot g_2'(t)\, dt=\sum_{j=1}^m n_j\cdot\int\int_{D_j}dx_1 dx_2=\sum_{j=1}^m n_j\cdot A(D_j).
\]
Since $A(D_1),\dots, A(D_m)$ are linearly independent over the field of rational numbers, all $n_j=0$. Then we deduce from equations \eqref{equ2.3} that all moments from $g_1,g_2$ vanish. 

\subsection{Proof of Theorem \ref{te2.1}}
We can reorganize the sum of the integrals on the right-hand side of \eqref{equ2.3} to get
\begin{equation}\label{eq2.3}
\sum_{j=1}^m n_j\cdot\int\int_{D_j}(-1)^i d_i\cdot x_i^{d_i-1}x_k^{d_k}\,dx_1 dx_2=\sum_{j=1}^m \tilde n_j\cdot\int\int_{S_j}(-1)^i d_i\cdot x_i^{d_i-1}x_k^{d_k}\,dx_1 dx_2,
\end{equation}
where $\tilde n_j\in\Z$ are sums of some elements of the family $\{n_i\}_{1\le i\le m}$. In particular, all $\tilde n_j=0$ if and only if all $n_j=0$.

Let $d_{S_j}$ be the distance to the closed set $\Re^2\setminus S_j$ in the $\ell_\infty^2$ metric on $\Re^2$.
By the definition $d_{S_j}$ is a Lipschitz function with respect to this metric with Lipschitz constant one. We set 
\[
h_j:={\rm sgn}(\tilde n_j)\cdot d_{S_j}\quad\text{ and}\quad h:=\sum_{j=1}^m h_j.
\]

By the definition of the function $d_{S_j}$ and the Rademacher theorem the derivative of $h_j$ restricted to a line $L$ parallel to one of the coordinate axes exists a.e. and its modulus is bounded by one on $L\cap S_j$ and equals zero on $L\setminus S_j$. Thus the derivative of $h|_L$ exists a.e. and its modulus is bounded by one on $L\cap S$, $S:=\sqcup_{j=1}^m S_j$, and equals zero on $L\setminus S$. This implies that $h|_L$ is Lipschitz with Lipschitz constant bounded by one. Applying to $h$ Proposition \ref{approx} we find a sequence of real polynomials $\{p_k\}_{ k\in\N}$, where $p_k$ is of degree $k$ in each variable, such that
\[
\|h-p_k\|_{C(Q)}\le \frac{2\pi d_{\Gamma_F}}{k}.
\]
(Here $Q$ is the minimal closed square, cf. Notation in Section 3, containing all $S_j$.)

Hence, we have
\[
\begin{array}{l}
\displaystyle
\sum_{j=1}^m |\tilde n_j|\cdot \int\int_{S_j} d_{S_j}(x_1,x_2)\, dx_1dx_2=
\left|\sum_{j=1}^m \tilde n_j\cdot \int\int_{S_j} h(x_1,x_2)\, dx_1dx_2\right|\\
\\
\displaystyle
\le \sum_{j=1}^m |\tilde n_j|\cdot \int\int_{S_j} |h(x_1,x_2)-p_{k}(x_1,x_2)|\, dx_1dx_2
+\left|\sum_{j=1}^m \tilde n_j \int\int_{S_j} p_k(x_1,x_2)\, dx_1dx_2\right|\\
\\
\displaystyle
\le \frac{2\pi d_{\Gamma_F}}{k}\left(\sum_{j=1}^m |\tilde n_j|\cdot A_j\right)+
\left|\sum_{j=1}^m \tilde n_j \int\int_{S_j} p_k(x_1,x_2)\, dx_1dx_2\right|.
\end{array}
\]

Let $Q_j$ be the  maximal open square contained in $S_j$. By $cQ_j$, $0<c\le 1$, we denote the square $c$-homothetic to $Q_j$ with respect to its center. Then we have
\[
 \int\int_{S_j} d_{S_j}(x_1,x_2)\, dx_1dx_2\ge \sup_{c\in (0,1]}\int\int_{cQ_j} d_{S_j}(x_1,x_2)\, dx_1dx_2=\sup_{c\in (0,1]} (1-c)c^2 r_j^3 =
 \frac{4}{27} r_j^3.
\]
This an the previous inequality imply
\begin{equation}\label{eq3.6}
\frac{4}{27} r_{\Gamma_F}^3\cdot\sum_{j=1}^m |\tilde n_j|\le \frac{2\pi d_{\Gamma_F}}{k} A_{\Gamma_F}\cdot \sum_{j=1}^m |\tilde n_j|+\left|\sum_{j=1}^m \tilde n_j \int\int_{S_j} p_k(x_1,x_2)\, dx_1dx_2\right|.
\end{equation}
Assuming that the second term on the right equals zero but $\sum_{j=1}^m |\tilde n_j|\ne 0$ we obtain, see \eqref{const},
\[
k\le \frac{27\pi}{2}\frac{A_{\Gamma_F} d_{\Gamma_F}}{r_{\Gamma_F}^3}< N_{\Gamma_F}.
\]
Therefore from \eqref{eq2.1}--\eqref{eq3.6} and the previous inequality we deduce that
vanishing of  moments 
\[
\int_a^b g_{1}(t)^{d_1}\cdot g_{2}(t)^{d_2}\cdot g'_{2}(t)\,dt\quad  \text{with}\quad \max\{d_1-1,d_2\}\le N_{\Gamma_F}
\]
implies that all $\tilde n_j= 0$ and so all moments from $g_1,g_2$ vanish.

The same argument (with the same conclusion) works for all $n_j$ in \eqref{eq2.2} being real numbers. In particular, since the moments on the right-hand side of \eqref{eq2.2} are linear functionals on
$H_1(\Gamma_F,\Re)$, the family 
\begin{equation}\label{eq3.7}
\left\{\sigma\mapsto \int_{\sigma}x_1^{d_1}x_2^{d_2} dx_2;\ \sigma\in H_1(\Gamma_F,\Re)\right\}\quad  \text{with}\quad \max\{d_1-1,d_2\}\le N_{\Gamma_F}
\end{equation}
forms the set of generators in the real vector space  $M_{\Gamma_F}$ generated by all such moments; moreover, the dimension of this space is $m:={\rm dim}\,H_1(\Gamma_F,\Re)$. (For otherwise, there exists a nonzero element $\sigma=\sum_{j=1}n_j[\ell_j]\in H_1(\Gamma_F,\Re)$ such that all moments in \eqref{eq3.7} vanish at $\sigma$, a contradiction.)

Let $\varphi_1,\dots,\varphi_m$ be a basis in $M_{\Gamma_F}$ formed by some moments of the family \eqref{eq3.7}. Then all other moments in $M_{\Gamma_F}$ can be expressed as linear combinations of $\varphi_1,\dots,\varphi_m$ with coefficients depending on $\Gamma_F$ only. This proves the main statement of the theorem:

For every  Lipschitz map $G=(g_1,g_2): [a,b]\rightarrow\Re^2$, $G(a)=G(b)$, with image in $\Gamma_F$
each moment from $g_1,g_2$ can be expressed as a finite linear combination with real coefficients  depending on $\Gamma_F$ only of $m$ moments  from the family
\[
\int_a^b g_{1}(t)^{d_1}\cdot g_{2}(t)^{d_2}\cdot g'_{2}(t)\,dt\quad  \text{with}\quad \max\{d_1-1,d_2\}\le \widetilde N
\]
for some $\widetilde N\le N_{\Gamma_F}$.

To complete the proof it remains to show that the optimal number $\widetilde N$  here cannot be less than $\lfloor\sqrt{m}-\frac 12\rfloor$. But assuming, on the contrary, that $\widetilde N< \lfloor\sqrt{m}-\frac 12\rfloor$ we easily obtain from the statement of the theorem, using duality between $H_1(\Gamma_F,\Re):=H_1(\Gamma_F)\otimes\Re$ and $M_{\Gamma_F}$, that the latter space is generated by the family 
\[
\left\{\sigma\mapsto \int_{\sigma}x_1^{d_1}x_2^{d_2} dx_2;\ \sigma\in H_1(\Gamma_F,\Re)\right\}\quad  \text{with}\quad \max\{d_1-1,d_2\}< \left\lfloor\sqrt{m}-\frac 12\right\rfloor.
\]
The cardinality of this family does not exceed $(\sqrt{m}+\frac 12)\cdot (\sqrt{m}-\frac 12)-1< m={\rm dim}\, M_{\Gamma_F}$, a contradiction.

The proof of the theorem is complete.
%=======
\section{Proof of Theorem \ref{te3.1}}
We exclude the case of $\Gamma_F$ homotopically trivial, because in this case all moments from coordinates of maps $G\in\mathcal H$ vanish,
see Proposition \ref{prop3.1}.
 
Next, we have
\begin{equation}\label{eq6.7'}
\int_a^b g_1(t)^{d_1}\cdots g_n(t)^{d_n}\cdot g_i'(t)\, dt=\int_G x_1^{d_1}\cdots x_n^{d_n} dx_i,\quad i=1,\dots, n.
\end{equation}
Here $x_1,\dots, x_n$ are coordinates on $\Re^n$ and $\int_G$ is the integral over the path $G$ (it is well defined because $G$ is Lipschitz).
Let $\ell_1,\dots , \ell_m : [0, 1]\rightarrow\Gamma_F$ be simple closed
Lipschitz curves in $\Re^n$ such that their images $[\ell_1],\dots, [\ell_m]$ in $H_1(\Gamma_F)$ are generators of this group.  
Let $[G]$ be the image of the closed path $G$ in $H_1(\Gamma_F)$.
Then $[G]=\sum_{j=1}^m n_j [\ell _j]$ for some $n_j\in\Z$, and since the integrands on the right-hand sides of \eqref{eq6.7'} are $d$-closed 1-forms on $\Gamma_F$ and curves $\ell_j$ are Lipschitz, 
\begin{equation}\label{eq6.8'}
\int_G x_1^{d_1}\cdots x_n^{d_n} dx_i=\sum_{j=1}^m n_{j}\int_{\ell_j} x_1^{d_1}\cdots x_n^{d_n} dx_i,\quad 1\le i\le n.
\end{equation}

Let $\mathbb F\subset\Re$ be the minimal field containing $\Q$ and all numbers $\int_{\ell_j} x_1^{d_1}\cdots x_n^{d_n} dx_i$ for all possible $i,j$ and $d_1,\dots, d_n\in\Z_+$. By definition, ${\rm card}\,\mathbb F=\aleph_0$ and all moments from coordinates of maps $G\in\mathcal H$ belong to $\mathbb F$.

Let $V\subset\Re^n\times\Re^n$ consist of vectors $v=(v_1,v_2)$ whose coordinates are algebraically independent over $\mathbb F$, i.e., for every nonzero polynomial $p$ on $\Re^n\times\Re^n\, (=\Re^{2n})$ with coefficients in $\mathbb F$, $p(v)\ne 0$. Since the cardinality of the set of such polynomials is $\aleph_0$ and their zero sets are of codimension at most one in $\Re^n\times\Re^n$, the set $(\Re^n\times\Re^n)\setminus V$ is of Lebesgue measure zero.

Let $v=(v_1,v_2)\in V$ and $v_i=(v_{i1},\dots, v_{in})$, $v_{ij}\in\Re$, $1\le j\le n$, $i=1,2$. Then for $G_v(t)=(g_{1v}, g_{2v}):=(\langle v_1,G(t)\rangle,\langle v_2,G(t)\rangle)$, $t\in [a,b]$, and $d\in\Z_+$ we have
\begin{equation}\label{eq6.7}
\begin{array}{l}
\displaystyle
\int_a^b g_{1v}(t)^d\cdot g_{2v}'(t)\, dt=\int_a^b\left(\sum_{j=1}^n v_{1j}g_j(t)\right)^d\cdot\left(\sum_{j=1}^n v_{2j}g_j'(t)\right) dt\\
\\
\displaystyle
=\int_a^b\left(\,\sum_{d_1+\cdots + d_n=d}\frac{d!}{d_1!\cdots d_n!}(v_{11} g_1)^{d_1}\cdots (v_{1n}g_{n})^{d_n}\right)\cdot\left(\sum_{j=1}^n v_{2j}g_j'(t)\right) dt.
\end{array}
\end{equation}
If all moments on the left-hand side of \eqref{eq6.7} equal to zero for $d\le N_v$, then, by the definition of the set $V$ and the previous identity, all moments
of degrees $d\le N_v$  from $g_1,\dots, g_n$ equal to zero, and vice versa. This shows that the constants $N$ and $N_v$ in the corresponding moments finiteness problems coincide and completes the proof of the theorem.

\sect{Proof of Theorem 3.4}
Similarly to \eqref{eq6.7'} and \eqref{eq6.8'} for every $G=(g_1,\dots, g_n)\in\mathcal G_F$ (see the Introduction) we have
\begin{equation}\label{eq7.1}
\int_a^b g_1(t)^{d_1}\cdots g_n(t)^{d_n}\cdot g_i'(t)\, dt=\sum_{j=1}^k n_j\cdot\int_{h_j} x_1^{d_1}\cdots x_n^{d_n} dx_i,\quad 1\le i\le n,
\end{equation}
for some $n_j\in\Z$.

Next, for a cube $K_i$ satisfying condition (*) of the theorem $h_{i}^{-1}(K_i\cap\gamma_i)=(a_i,b_i)\subset [0,1]$ (because $h_i$ is injective). If $h_i=(h_{i1},\dots, h_{in})$, then $h_{is_i}: (a_i,b_i)\rightarrow\Re$ is an injective Lipschitz map. In particular, $h_{is_i}$ is monotonic, i.e., $h_{is_i}'$ is either nonnegative or nonpositive a.e. on $(a_i,b_i)$. We define $\delta_{s_i}:=1$ in the first case and $\delta_{s_i}:=-1$ in the second one. 

Making the rearrangement of coordinate indices, if necessary, without loss of generality we may assume that there exist integers $m\in [1,n]$ and $0:= k_0<k_1<\cdots< k_m:=k$ such that
$s_i=r+1$ for all $k_{r}+1\le i\le k_{r+1}$, $r=0,\dots, m-1$.

Let $d_{K_i}$ be the distance to the closed set $\Re^n\setminus K_i$ in the $\ell_\infty^n$ metric on $\Re^n$.
Then $d_{K_i}$ is a Lipschitz function with respect to this metric with Lipschitz constant one.
We set
\[
d_i:={\rm sgn}\, n_i\cdot\delta_{s_i}\cdot d_{K_i},\quad 1\le i\le k,\quad\text{and}\quad \tilde d_r:=\sum_{j=1}^{k_r-k_{r-1}} d_{k_{r-1}+j},\quad 1\le r\le m.
\]
By the definition of the function $d_{K_i}$ and the Rademacher theorem, the derivative of $d_i$ restricted to a line $L$ parallel to one of the coordinate axes exists a.e. and its modulus is bounded by one on $L\cap K_i$ and equals zero on $L\setminus K_i$. From here and the fact that
the cubes $K_i$ are pairwise disjoint we obtain that the derivative of each $\tilde d_r|_L$ exists a.e. and its modulus is bounded by one, i.e., $\tilde d_r|_L$ is Lipschitz with Lipschitz constant bounded by one.
Applying to $\tilde d_r$ Proposition \ref{approx} we find a sequence of real polynomials $\{p_{r,k}\}_{ k\in\N,\, 1\le r\le m}$, where $p_{r,k}$ is of degree $k$ in each variable, such that
\[
\|\tilde d_r-p_{r,k}\|_{C(K)}\le \frac{n\pi d_{\Gamma_F}}{k}.
\]
(Here $K$ is the minimal closed cube containing images of all curves $h_i$.)

Hence, we have
\[
\begin{array}{l}
\displaystyle
\sum_{r=1}^m\left(\sum_{j=1}^{k_r-k_{r-1}} |n_j|\cdot\delta_r\cdot \int_{h_j} d_{K_j}(x_1,\dots, x_n)\, dx_{r}\right)=
\left|\sum_{j=1}^{k}n_j\cdot \int_{h_j}\sum_{r=1}^m \tilde d_r(x_1,\dots, x_n)\, dx_r\right|\\
\\
\displaystyle
\le \sum_{j=1}^{k} |n_j|\cdot \int_{h_j} \sum_{r=1}^m|\tilde d_r(x_1,\dots, x_n)-p_{r,k}(x_1,\dots,x_n)|\cdot |dx_r|\\
\\
\displaystyle
+\left|\sum_{j=1}^{k}n_j \int_{h_j} \sum_{r=1}^m p_{r,k}(x_1,\dots,x_n)\, dx_r\right|
\le \frac{n\pi d_{\Gamma_F}}{k}\left(\sum_{j=1}^k | n_j|\cdot \int_{h_j}\sum_{r=1}^m |dx_r|\right)\\
\\
\displaystyle
+
\left|\sum_{j=1}^k n_j \int_{h_j} \sum_{r=1}^m p_{r,k}(x_1,\dots, x_n)\, dx_r\right|\\
\\
\displaystyle
\le \frac{n\pi L_{\mathcal T} d_{\Gamma_F}}{k}\cdot \sum_{j=1}^k | n_j|+
\left|\sum_{j=1}^k n_j \int_{h_j} \sum_{r=1}^m p_{r,k}(x_1,\dots, x_n)\, dx_r\right|.
\end{array}
\]
On the other hand, in notation of the theorem we have
\[
\begin{array}{l}
\displaystyle
\sum_{r=1}^m\left(\sum_{j=1}^{k_r-k_{r-1}} |n_j|\cdot\delta_r\cdot \int_{h_j} d_{K_j}(x_1,\dots, x_n)\, dx_{r}\right)\ge
\sum_{r=1}^m\left(\sum_{j=1}^{k_r-k_{r-1}} |n_j|\cdot\frac{r_j\cdot l_j}{2} \right)\\
\\
\displaystyle
\ge \frac{r_{\mathcal T}l_{\mathcal T}}{2}\cdot\sum_{j=1}^k |n_j|.
\end{array}
\]
Combining the last two inequalities we obtain
\begin{equation}\label{eq7.2}
\frac{r_{\mathcal T}l_{\mathcal T}}{2}\cdot\sum_{j=1}^k |n_j|\le \frac{n\pi L_{\mathcal T} d_{\Gamma_F}}{k}\cdot\sum_{j=1}^k | n_j|+\left|\sum_{j=1}^k n_j \int\int_{h_j} \sum_{r=1}^m p_{r,k}(x_1,\dots, x_n)\, dx_r\right|.
\end{equation}
Assuming that the second term on the right-hand side equals zero but $\sum_{j=1}^m |n_j|\ne 0$ we obtain
\[
k\le\frac{2\pi n L_{\mathcal T} d_{\Gamma_F}}{r_{\mathcal T}l_{\mathcal T}}<\bar{N}_{\mathcal T}.
\]
Therefore from \eqref{eq7.1} and the previous inequality we deduce that for each $G=(g_1,\dots, g_n)\in\mathcal G_F$ vanishing of moments
\[
\int_a^b g_1(t)^{d_1}\cdots g_n(t)^{d_n}\cdot g_i'(t)\, dt\quad  \text{with}\quad \max_{1\le j\le n}\{d_j\}\le \bar{N}_{\mathcal T},\quad 1\le i\le n,
\]
implies that all $n_j= 0$ and so all moments from $g_1,\dots, g_n$ vanish.

The same argument (with the same conclusion) works if all $n_j$ on the right-hand side of \eqref{eq7.1} are real numbers. 

Now, let
$\ell_1,\dots,\ell_m: [0,1]\rightarrow C\subset\Gamma_F$ be simple closed Lipschitz curves whose images $[\ell_1],\dots,[\ell_m]\in H_1(\Gamma_F)$ form the set of generators of this group. Then each element $\sigma \in H_1(\Gamma_F,\Re):=H_1(\Gamma_F)\otimes\Re$ is presented in a unique way as
$\sigma=\sum_{j=1}^mc_j[\ell_j]$ for some $c_j\in\Re$. We have, for each $1\le i\le n$,
\[
\sum_{j=1}^m c_j\cdot\int_{\ell_j} x_1^{d_1}\cdots x_n^{d_n} dx_i=\sum_{j=1}^k \tilde c_j\cdot\int_{h_j} x_1^{d_1}\cdots x_n^{d_n} dx_i,
\]
where each $\tilde c_j$ is a linear combination with integer coefficients of some $c_i$. Moreover, all $c_j$ are equal to zero as soon as all $\tilde c_j$ are equal to zero. Thus vanishing of all moments
\[
\sum_{j=1}^m c_j\cdot\int_{\ell_j} x_1^{d_1}\cdots x_n^{d_n} dx_i\quad  \text{with}\quad \max_{1\le j\le n}\{d_j\}\le \bar{N}_{\mathcal T},\quad 1\le i\le n,
\]
implies that $\sigma=0$.

We finish the proof repeating literally the arguments from the proof of Theorem \ref{te2.1}. Specifically, we obtain that the family
\begin{equation}\label{eq7.3}
\left\{\sigma\mapsto \int_{\sigma}x_1^{d_1}\cdots x_n^{d_n} dx_i;\ \sigma\in H_1(\Gamma_F,\Re)\right\}\ \text{with}\ \max_{1\le j\le n}\{d_j\}\le \bar{N}_{\mathcal T},\ 1\le i\le n,
\end{equation}
forms the set of generators in the real vector space  $M_{\Gamma_F}$ generated by all such moments and its dimension is $m:={\rm dim}\,H_1(\Gamma_F,\Re)$. Let $\varphi_1,\dots,\varphi_m$ be a basis in $M_{\Gamma_F}$ formed by some moments of the family \eqref{eq7.3}. Then all other moments in $M_{\Gamma_F}$ can be expressed as linear combinations of $\varphi_1,\dots,\varphi_m$ with coefficients depending on $\Gamma_F$ only. This proves the main statement of the theorem. 

Further, assuming, on the contrary, that $\widetilde N< \max\left\{\left\lfloor\sqrt[n]{\frac mn}-1\right\rfloor, 1\right\}=:A$ we easily obtain from the statement of the theorem, using duality between $H_1(\Gamma_F,\Re)$ and $M_{\Gamma_F}$, that the latter space is generated by the family 
\[
\left\{\sigma\mapsto \int_{\sigma}x_1^{d_1}\cdots x_n^{d_n} dx_i;\ \sigma\in H_1(\Gamma_F,\Re)\right\}\quad  \text{with}\quad \max_{1\le j\le n}\{d_j\}<A,\quad 1\le i\le n.
\]
The cardinality of this family does not exceed $m={\rm dim}\, M_{\Gamma_F}$, a contradiction.

The proof of the theorem is complete.

\section{Proofs of Results of Section 4}

\begin{proof}[Proof of Theorem \ref{teo4.2} ]
As was mentioned after the formulation of the theorem, it suffices to prove the implication (d)$\Rightarrow$(b) only. So let $F=(f_1,\dots, f_n): [a,b]\rightarrow\Re^n$ be a Lipschitz map such that $\Gamma_F$ is Lipschitz triangulable and traversable and $F$ is a closed path that covers an Eulerian trail in $\Gamma_F$ and represents zero element of the homology group $H_1(\Gamma_F)$. This means that $F=\tilde F\circ\tilde f$ for continuous maps $\tilde f: [a,b]\rightarrow\mathbb S$ and $\tilde F: I:=\tilde f([a,b])\rightarrow \Gamma_F$ such that $\tilde F$ is surjective and injective outside a finite number of points of $I$. 

Let $p:\Re:=\Re/\Z:=\mathbb S$ be the universal covering of $\mathbb S$. By the covering homotopy theorem, see, e.g., \cite{Hu}, there exists a path $f':[a,b]\rightarrow\Re$, $f'(a):=0$, such that $\tilde f=p\circ f'$. Consider the function
$g: [a,b]\rightarrow f'([a,b])$, $g(s):=\frac{f'(b)}{b-a}(s-a)$, $s\in [a,b]$. Then 
$H(t)=t\cdot f'+(1-t)\cdot g$, $0\le t\le 1$, is a homotopy between $f'$ and $g$ fixing points $f'(a)$ and $f'(b)$ and such that the range of each $H(t)$ belongs to $f'([a,b])$. In turn, since $F(a)=F(b)$, the composition $\tilde F\circ p\circ H$ is a homotopy between closed paths $F: [a,b]\rightarrow\Gamma_F$ and
$\tilde g:=\tilde F\circ p\circ g: [a,b]\rightarrow\Gamma_F$. In particular, $\tilde g$ represents zero element of $H_1(\Gamma_F)$ as well.

Set $n:=\lfloor f'(b) \rfloor$ and $l:=f'(b)-n$. Then $p\circ g: [a,b]\rightarrow\mathbb S$ is the composition of two paths, one represents the element $n\cdot e$ in the fundamental group $\pi_1(\mathbb S, \tilde f(a))$, where $e$ is the generator of this group, and another one, say, $\gamma$, is a simple proper arc of length $l$ in $\mathbb S$ with endpoints $\tilde f(a)$, $\tilde f(b)$. 

First, suppose that $n\ne 0$ and $l\ne 0$. Then $\tilde f$ is surjective and, in particular, the domain of $\tilde F$ is $\mathbb S$. Since $\tilde F$ is an Eulerian trail in $\Gamma_F$, we obtain immediately that $\tilde F\circ e$ and $\tilde  F\circ\gamma$ are closed paths in $\Gamma_F$ representing linearly independent elements $h_1, h_2\in H_1(\Gamma_F,\Q)$. Hence, $\tilde g$ represents the element $n h_1+h_2=0\in H_1(\Gamma_F,\Q)$. This yields $h_2=0$ and $n=0$, a contradiction. Similarly, the cases $n\ne 0$, $l=0$ and $n=0$, $\ell\ne 0$ lead to contradictions as well. Thus we have $n=0$ and $l=0$. This shows that $\tilde f$ is a closed contractible path in $\mathbb S$ and $f'$ is a closed path in $\mathbb R$ that covers $\tilde f$, i.e., $F=(\tilde F\circ p)\circ f'$, where $f': [a,b]\rightarrow\Re$ satisfies $f'(a)=f'(b)$ and this is statement (b).

The proof of the theorem is complete.
\end{proof}

\begin{proof}[Proof of Theorem \ref{teo4.4.1}]
According to the formula for the first return map of equation \eqref{eq4.3}, see, e.g., \cite{B1}, the necessary condition for equation
\[
\frac{dv}{dt}=f_1' v^{2}+f_2' v^3,
\]
where $F:=(f_1,f_2):[a,b]\rightarrow\Gamma_F\Subset\Re^2$, $F(a)=F(b)=0$, to determine a center is
\[
3\cdot\int_a^b f_1(t)\cdot f_2'(t)\, dt+2\cdot\int_a^b f_2(t)\cdot f_1'(t)\, dt=0.
\] 
(The expression on the left-hand side is the 4th coefficient in the power series decomposition of the first return map).
We have
\[
\int_a^b f_1(t)\cdot f_2'(t)\, dt+\int_a^b f_2(t)\cdot f_1'(t)\, dt=\bigl(f_1(t)\cdot f_2(t)\bigr)|_a^b=0.
\]
Thus the above condition is equivalent to
\begin{equation}\label{eq9.4}
\int_a^b f_1(t)\cdot f_2'(t)\, dt=0.
\end{equation}
From here, the hypothesis of the theorem and Proposition \ref{prop2.2} we obtain that all moments from $f_1$ and $f_2$ vanish. But then, according to Theorem \ref{teo4.2}, the above Abel differential equation determines a universal center. This shows that condition \eqref{eq9.4} is also sufficient for the Abel equation satisfying assumptions of the theorem to determine a center, and all centers of such equations are universal.
\end{proof}

\begin{proof}[Proofs of Theorem \ref{teo4.5} and \ref{teo4.6}]
The proofs follow straightforwardly from Theorems \ref{te2.1}, \ref{te3.4} and \ref{teo4.2}.
\end{proof}

\end{document}